\newtheorem{Lemma}{Lemme}[section]
\newtheorem{Theo}[Lemma]{Théorème}
\newtheorem{Cor}[Lemma]{Corollaire}
\newtheorem{Pro}[Lemma]{Proposition}
\theoremstyle{definition}
\newtheorem{Def}[Lemma]{Définition}
\newtheorem{Theoreme}[]{Théorème}
\theoremstyle{definition}
\theoremstyle{remark}
\theoremstyle{remark}
\newtheorem{Rem}[Lemma]{Remarque}
\newcommand{\NN}{\mathbb{N}}
\newcommand{\RR}{\mathbb{R}}
\newcommand{\CC}{\mathbb{C}}
\newcommand{\G}{\rtimes^{\rho} G}
\newcommand{\rG}{\rtimes^{\rho}_{r} G}
\newcommand{\End}{\mathrm{End}}
\numberwithin{equation}{section}
\newcommand{\A}{\mathcal{A}^{\rho}_r(G)}
\newcommand{\Amax}{\mathcal{A}^{\rho}(G)}
\newcommand{\EEnd}{\mathrm{End}}
\title{Représentations non unitaires, morphisme de Baum-Connes et complétions inconditionnelles}
\author{Maria Paula Gomez Aparicio}
\long\def\symbolfootnote[#1]#2{\begingroup%
\def\thefootnote{\fnsymbol{footnote}}\footnote[#1]{#2}\endgroup}
\date{}
\begin{document}
\maketitle
\begin{abstract}
Nous montrons la bijectivité du morphisme de Baum-Connes tordu par une représentation non unitaire, défini dans \cite{Gomez08}, pour une grande classe de groupes vérifiant la conjecture de Baum-Connes et qui contient en particulier tous les groupes de Lie réel semi-simples, tous les groupes hyperboliques, ainsi que des groupes inifinis discrets ayant la propriété (T) de Kazhdan. Nous définisons une opération de tensorisation par une représentation de dimension finie non unitaire sur le membre de gauche du morphisme de Baum-Connes et nous montrons que son analogue en $K$-théorie est forcément défini sur la $K$-théorie des algèbres de groupe tordues introduites dans \cite{Gomez07}.\\

\begin{center}{\bf Abstract}
 
\end{center}

 \indent We show that the Baum-Connes morphism twisted by a non-unitary representation, defined in \cite{Gomez08}, is an isomorphism for a large class of groups satisfying the Baum-Connes conjecture. Such class contains all the real semi-simple Lie groups, all hyperbolic groups and many infinite discret groups having Kazhdan's property (T). We define a tensorisation by a non-unitary finite dimensional representation on the left handside of the Baum-Connes morphism and we show that its analogue in $K$-theory must be defined on the $K$-theory of the twisted group algebras introduced in \cite{Gomez07}.

\end{abstract}



\symbolfootnote[0]{\phantom{a}\hspace{-7mm}\textit{2000 MSC.}22D12, 22D15, 46L80, 19K35}

\symbolfootnote[0]{\phantom{a}\hspace{-7mm}\textit{Keywords and
    phrases:} Non-unitary representations, Banach algebras, $KK$-theory, Baum-Connes conjecture}


\section*{Introduction}
Soit $G$ un groupe localement compact et $\rho$ une représentation de $G$, pas nécessairement unitaire, sur un espace de dimension finie $V$. Dans \cite{Gomez07} (voir aussi \cite{Gomez08} et \cite{GomezThese}), nous avons défini un analogue \emph{tordu par $\rho$} de la $C^*$-algèbre réduite de $G$, que nous avons noté $\mathcal{A}_r^{\rho}(G)$, en considérant la complétion de l'espace vectoriel $C_c(G)$ des fonctions continues à support compact sur $G$ pour la norme donnée par la formule
$$\|f\|_{\mathcal{A}_r^{\rho}(G)}=\|(\lambda_{G}\otimes\rho)(f)\|_{\mathcal{L}(L^{2}(G)\otimes V)},$$
pour tout $f\in C_c(G)$ et où $\lambda_{G}$ est le représentation régulière gauche de $G$. Nous avons ensuite défini un morphisme 
$$\mu_{\rho,r}:K^{\mathrm{top}}(G)\rightarrow K(\A),$$
qui coïncide avec le morphisme de Baum-Connes $\mu_r$ (cf. \cite{Baum-Connes-Higson}), si $\rho$ est unitaire. Nous avons alors montré que $\mu_{\rho,r}$ est injectif pour tout groupe $G$ pour lequel il existe un élément $\gamma$ de Kasparov (cf. \cite[Théorème 3.8]{Gomez08} et \cite{Tu99} pour la définition d'un élément $\gamma$). Nous avons en plus montré que si cet élément $\gamma$ est égal à $1$ dans $KK_G(\CC,\CC)$, alors $\mu_{\rho,r}$ est un isomorphisme (cf. \cite[Théorème 3.7]{Gomez08}). L'existence d'un élément $\gamma$ de Kasparov a été montrée pour une classe très large de groupes, notée $\mathcal{C}$ dans \cite{Lafforgue02} (voir aussi \cite{Gomez08} et \cite{GomezThese}), qui contient en particulier tous les groupes de Lie semi-simples, ainsi que tous les sous-groupes fermés d'un groupe de Lie semi-simple. Cette classe contient donc, en particulier, des groupes ayant la propriété (T) de Kazhdan (cf. \cite{Kazhdan}, \cite{delaHarpe-Valette}). Cependant, dès qu'un groupe localement compact $G$ a la propriété (T), la représentation triviale étant isolée dans le dual unitaire de $G$, il est impossible de construire une homotopie entre un élément $\gamma$ de Kasparov et $1$ dans $KK_G(\CC,\CC)$. Ceci implique que les résultats de \cite{Gomez08} ne sont pas suffissants pour montrer la surjectivité de $\mu_{\rho,r}$ pour des groupes ayant la propriété (T). Dans cet article, nous allons montrer que le morphisme de Baum-Connes tordu $\mu_{\rho,r}$ est un isomorphisme pour beaucoup de groupes contenus dans la classe $\mathcal{C}$ et ayant la propriété (T). Plus précisément, nous allons montrer le théorème suivant
\begin{Theoreme}
 Soit $G$ un groupe localement compact satisfaisant les deux conditions suivantes:
 \begin{enumerate}
 \item il existe une complétion inconditionnelle de $C_c(G)$ qui est une sous-algèbre de $C^*_r(G)$ dense et stable par calcul fonctionnel holomorphe (ou plus précisément faiblement pleine dans $C^*_r(G)$, voir définition \ref{fplein});
  \item pour toute complétion inconditionnelle $\mathcal{B}(G)$ de $C_c(G)$ le morphisme $\mu_{\mathcal{B}}: K^{\mathrm{top}}(G)\rightarrow K(\mathcal{B}(G))$ est un isomorphisme.
      \end{enumerate}
 Soit $\rho$ une représentation de dimension finie de $G$. Alors le morphisme de Baum-Connes tordu par $\rho$ $$\mu_{\rho,r}:K^{\mathrm{top}}(G)\rightarrow K(\mathcal{A}^{\rho}_r(G)),$$
est un isomorphisme.
\end{Theoreme}

On rappelle que Lafforgue dans \cite[Théorème 0.02]{Lafforgue02} a démontré que, pour toute complétion inconditionnelle $\mathcal{B}(G)$, le morphisme $$\mu_{\mathcal{B}}:K^{\mathrm{top}}(G)\to K(\mathcal{B}(G)),$$ est bijectif pour une classe de groupes qu'il note $\mathcal{C'}$. On rappelle ici que la classe $\mathcal{C'}$ est constituée par les groupes a-T-menables et par tous les groupes localement compacts agissant de façon continue, isométrique et propre sur un des espaces suivants:\\
\begin{itemize}
 \item sur une variété riemannienne complète simplement connexe, dont la courbure sectionnelle est négative ou nulle et bornée inférieurement, et dont la dérivée du tenseur de courbure (suivant la connexion induite de la connexion de Levi-Civita sur le fibré tangent) est bornée, ou
\item sur un immeuble affine de Bruhat-Tits uniformément localement fini, ou
\item sur un espace métrique uniformément localement fini, faiblement géodésique, faiblement bolique et vérifiant une condition supplémentaire de bolicité forte.\\
\end{itemize}
Cette classe contient, en particulier, tous les groupes de Lie semi-simples réels, ainsi que tous les sous-groupes fermés d'un groupe de Lie semi-simple réel.\\

D'autre part, si $G$ est groupe de Lie semi-simple réel, Lafforgue a constuit une variante de l'algèbre de Schwartz généralisée $\mathcal{S}(G)$ (cf. \cite{Lafforgue02}), qui est une sous-algèbre dense et stable par calcul fonctionnel holomorphe dans $C^*_r(G)$. Ceci implique que le morphisme $$K(\mathcal{S}(G))\to K(C^*_r(G)),$$ induit par l'inclusion, est un isomorphisme, et donc que le morphisme $\mu_{\rho,r}$ est aussi un isomorphisme. De plus, $\mu_{\rho,r}$ est un isomorphisme pour tout groupe de la classe $\mathcal{C}'$ ayant la propriété (RD); dans ce cas c'est une variante de l'algèbre de Jolissaint associée à $G$ \cite{Jolissaint} qui est une complétion inconditionnelle, dense et stable par calcul fonctionnel holomorphe dans $C^*_r(G)$. En particulier, $\mu_{\rho,r}$ est un isomorphisme pour tout groupe hyperbolique et pour tous les sous-groupes discrets et cocompacts de $SL_3(F)$, où $F$ est un corps local, de $SL_3(\mathbb{H})$ et de $E_{6(-26)}$ (cf. \cite{Lafforgue00}, \cite{Chatterji03}, \cite{delaHarpe-Valette}).\\

Les résultats de cet article complètent les résultats de \cite{Gomez08}. Le morphisme de Baum-Connes tordu est alors un isomorphisme pour la plupart des groupes pour lesquels on sait montrer la conjecture de Baum-Connes et on peut même espérer que les deux soient vérifiés toujours au même temps. Nous espérons ainsi que le morphisme de Baum-Connes tordu par n'importe quelle représentation de dimension finie, soit un isomorphisme au moins pour tout groupe appartenant à la classe $\mathcal{C}$.\\

D'autre part, dans \cite{Valette88}, Valette a défini une action de l'anneau des représentations non unitaires de dimension finie de $G$, que nous notons $R_F(G)$, sur le membre de gauche du morphisme de Baum-Connes, $K^{\mathrm{top}}(G)$, qui généralise l'action induite par produit tensoriel par une représentation de dimension finie dans le cas des groupes compacts. Dans le cas des groupes de Lie connexes, il définit une action de $R_F(G)$ sur l'image de l'élément $\gamma$ de Kasparov dans $K(C^*_r(G))$ qu'il interprète ensuite en termes de foncteurs de Zuckerman. En général, pour toute représentation de dimension finie $(\rho,V)$ de $G$, on aimerait définir un endomorphisme de $K(C^{*}_r(G))$ qui soit analogue au morphisme induit par produit tensoriel par $\rho$ sur l'anneau des représentations de $G$. Ceci définirait un espèce d'analogue en $K$-théorie des foncteurs de translation de Zuckerman \footnote{Dans le cadre de la $K$-théorie les foncteurs de Zuckerman ont été considérés par Bost.} (cf. \cite{Zuckerman77}, \cite[Chapter VII]{Knapp-Vogan}). Cependant, comme $\rho$ n'est pas supposée unitaire, le produit tensoriel par $\rho$ induit un morphisme de $C_c(G)$ dans $C^*_r(G)\otimes\mathrm{End}(V)$ qui n'est pas défini sur $C^*_r(G)$ et dont le domaine de définition est l'algèbre tordue $\A$. On a donc un morphisme d'algèbres de Banach de $\A$ dans $C^*_r(G)\otimes\mathrm{End}(V)$, qui, par équivalence de Morita, induit un morphisme en $K$-théorie
$$\Lambda_{\rho}:K(\A)\rightarrow K(C^*_r(G)).$$

De plus, l'action de $R_F(G)$ sur $K^{\mathrm{top}}(G)$ est définie de façon très naturelle. Dans le cas des représentations unitaires, cette action a été définie par Kasparov (cf. \cite{Kasparov88}). Dans le cas général, pour toute représentation de dimension finie $\rho$ de $G$ nous allons définir un endomorphisme $$\Upsilon_{\rho}:K^{\mathrm{top}}(G)\rightarrow  K^{\mathrm{top}}(G),$$ qui coïncide avec le produit tensoriel par $\rho$ lorsque est unitaire. L'action de $R_F(G)$ sur $K^{\mathrm{top}}(G)$ induite par $\Upsilon_{\rho}$ que nous allons alors définir coïncide avec l'action définie par Valette dans \cite{Valette88}.

Le but de la deuxième partie de cet article est de montrer le théorème suivant
\begin{Theoreme}\label{Ktopprincipal}
 Soit $G$ un groupe localement compact et $\rho$ une représentation non unitaire de dimension finie de $G$. Alors le diagramme suivant
 $$\xymatrix{
    K^{\mathrm{top}}(G)\ar[r]^{\mu_{\rho,r}}\ar[d]_{\Upsilon_{\rho}}& K(\A)\ar[d]^{\Lambda_{\rho}}\\
K^{\mathrm{top}}(G)\ar[r]^{\mu_r}& K(C^*_r(G)),
}$$
est commutatif.\\
\end{Theoreme}

\medskip
\noindent{\bf Remerciements.} Les résultats de cet article, ainsi que ceux de \cite{Gomez08}, font partie des travaux présentés pour l'obtention de mon doctorat réalisé sous la direction de Vincent Lafforgue. Je voudrais le remercier de m'avoir suggéré ce sujet, ainsi que pour tous ses conseils. Je remercie aussi Alain Valette pour m'avoir indiqué \cite{Valette88} et pour ses commentaires.

\section{Rappels et notations}
Soit $G$ un groupe localement compact et soit $dg$ une mesure de Haar à gauche sur $G$. On note $\Delta$ la fonction modulaire de $G$, c'est-à-dire que $dg^{-1}=\Delta(g)^{-1}$ pour tout $g\in G$.\\
On appelle longueur sur $G$ toute fonction $\ell:G\to [0,+\infty[$ continue et telle que $\ell(g_1g_2)\leq \ell(g_1)+\ell(g_2),$ pour tout $g_1, g_2 \in G$.\\
Étant donnée une $G$-$C^*$-algèbre $A$, pour tout $g\in G$ et $a\in A$, on note $g.a$, ou $g(a)$, l'action de $g$ sur $a$. On considère l'espace vectoriel des fonctions continues à support compact sur $G$ à valeurs dans $A$, que l'on note $C_c(G,A)$, muni de la structure d'algèbre involutive dont la multiplication est donnée par la formule
$$(f_1*f_2)(g)=\int_Gf_1(g_1)g_1(f_2(g_1^{-1}g))dg_1,$$ pour $f_1,f_2\in C_c(G,A)$ et l'involution par la formule
$$f^*(g)=g(f(g^{-1}))^*\Delta(g^{-1}),$$ pour $f\in C_c(G,A)$, $g\in G$. Intuitivement, on représente tout élément $f\in C_c(G,A)$ par l'intégrale formelle $\int_Gf(g)e_gdg$, où $e_g$ est une lettre formelle qui satisfait les conditions suivantes $$e_ge_{g'}=e_{gg'}, e_g^*=(e_g)^{-1}=e_{g^{-1}}\quad\text{et}\quad e_gae_g^{-1}=g.a,$$ pour tout $g\in G$ et pour tout $a\in A$.\\

Soient $A$ et $B$ des $G$-$C^*$-algèbres. Pour toute longueur $\ell$ sur $G$, on note $\iota$ le morphisme
$$\iota:KK_G(A,B)\to KK^{\mathrm{ban}}_{G,\ell}(A,B),$$ défini dans \cite[Proposition 1.6.1]{Lafforgue02}.

Si $A$, $B$ et $D$ sont des $G$-algèbres de Banach, on note $A\otimes^{\pi} D$ le produit tensoriel projectif, c'est-à-dire le complété-séparé du produit tensorile algébrique $A\otimes^{alg}_{\CC}D$ pour la plus grande semi-norme $\|.\|$ telle que $\|a\otimes d\|\leq \|a\|_A\,\|d\|_D$, pour tout $a\in A$ et $d\in D$. On note alors 
$\sigma_D$ le morphisme $$\sigma_D:KK^{\mathrm{ban}}_{G,\ell}(A,B)\to KK^{\mathrm{ban}}_{G,\ell}(A\otimes^{\pi}D,B\otimes^{\pi}D),$$
défini dans \cite[Définition 1.2.2]{Lafforgue02}.\\
De plus, on note $\Sigma$ le morphisme $$\Sigma: KK^{\mathrm{ban}}(A,B)\to\mathrm{Hom}\big(K(A),K(B)\big),$$
défini dans \cite[Proposition 1.2.9]{Lafforgue02} et qui induit l'action de la $KK$-banachique sur la $K$-théorie.\\

Soit $\rho$ une représentation de $G$ sur un espace vectoriel complexe $V$ de dimension finie et muni d'une structure hermitienne. Dans \cite{Gomez08}, nous avons défini des produits croisés tordus par la représentation $\rho$ en considérant l'application suivante
\begin{align*}
 C_c(G,A)&\rightarrow C_c(G,A)\otimes\End(V)\\
\int_Gf(g)e_gdg&\mapsto\int_Gf(g)e_g\otimes\rho(g)dg.
\end{align*}
On rappelle la défintion des produits croisés tordus
\begin{Def}\label{produitscroisés}
Le produit croisé (resp. produit croisé réduit) de $A$ et $G$ tordu par la représentation $\rho$, noté $A\rtimes^{\rho} G$ (resp. $A\rtimes^{\rho}_r G$), est le complété (séparé) de $C_c(G,A)$
pour la norme:
$$\|\int_Ga(g)e_gdg\|_{A\G}=\|\int_Ga(g)e_g\otimes\rho(g)dg\|_{C^*(G,A)\otimes\End(V)},$$
(resp. $\|.\|_{C^*_r(G,A)\otimes\End(V)}$) où $C^*(G,A)\otimes \End(V)$ (resp.$C^*_r(G,A)\otimes\End(V)$) est le produit tensoriel minimal de $C^*$-algèbres.\\
Si $A=\CC$, alors on note $$\mathcal{A}^{\rho}(G):=\CC\rtimes^{\rho}G\quad\hbox{et}\quad\mathcal{A}^{\rho}_r(G):=\CC\rtimes_r^{\rho}G.$$ 
\end{Def}

\begin{Rem}
 L'algèbre $\A$ coïncide avec le complété-séparé de $C_c(G)$ par la norme $$\|f\|=\|(\lambda_G\otimes\rho)(f)\|_{\mathcal{L}(L^2(G)\otimes V)},$$
où $\lambda_G$ est la représentation régulière gauche de $G$. De même, pour tout $f\in C_c(G)$, $$\|f\|_{\Amax}=\sup\limits_{(\pi,H)}\|(\pi\otimes\rho)(f)\|_{\mathcal{L}(H\otimes V)},$$ où $(\pi,H)$ parcourt l'ensemble des représentations unitaires de $G$.\\
\end{Rem}

Soient $A$ et $B$ des $G$-$C^*$-algèbres. On note $j_{\rho}$ et $j_{\rho,r}$ les morphismes de descente \emph{tordus} 
$$j_{\rho}:KK_G(A,B)\to KK^{\mathrm{ban}}(A\G, B\G),$$
$$j_{\rho,r}:KK_G(A,B)\to KK^{\mathrm{ban}}(A\rG, B\rG),$$
définis dans \cite[Définition 2.9]{Gomez08}.\\  

Considérons maintenant $\underline{E}G$ l'espace classifiant pour les actions propres de $G$. Pour toute $G$-$C^*$-algèbre $A$, on note $K^{\mathrm{top}}(G,A)$ la $K$-homologie $G$-équivariante de $\underline{E}G$ à valeurs dans $A$ introduite dans \cite{Baum-Connes-Higson}, c'est-à-dire $$K^{\mathrm{top}}(G,A)=\lim\limits_{\longrightarrow}KK_G(C_0(X),A),$$
où la limite inductive est prise parmi les $G$-espace propres $X$ qui sont des sous-espaces fermés de $\underline{E}G$ tels que $X/G$ est compact.\\
Dans \cite[Définition 2.17]{Gomez08}, nous avons défini deux morphismes de groupes
$$\mu^A_{\rho}:K^{\mathrm{top}}(G,A)\rightarrow K(A\G)\quad\hbox{et}\quad\mu^A_{\rho,r}:K^{\mathrm{top}}(G,A)\rightarrow K(A\rG),$$de la façon suivante: \\
Soit $X$ est un espace $G$-propre et $G$-compact. Soit $c$ une fonction continue à support compact sur $X$ à valeurs dans $\RR^+$ telle que, pour tout $x\in X$, $\int_G c(g^{-1}x)dg=1$ (on rappelle qu'une fonction avec ces propriétés existe parce que $X/G$ est compact, voir \cite{Tu99}). On considère la fonction sur $G\times X$
$$p(g,x):=\sqrt{c(x)c(g^{-1}x)},$$
qui définit un projecteur de $C_c(G,C_0(X))$. On note $p_{\rho}$ (resp. $p_{\rho,r}$) l'élément de $K(C_0(X)\G)$ (resp. $K(C_0(X)\rG)$) qu'il définit.\\ 
Soit $x\in KK_G(C_0(X),A)$. Alors $\mu^A_{\rho}$ et $\mu^A_{\rho,r}$ sont donnés, à passage à la limite inductive près, par les formules $$\mu_{\rho,r}^A(x)=\Sigma(j_{\rho,r}(x))(p_{\rho,r})\quad\text{et}\quad\mu_{\rho,r}^A(x)=\Sigma(j_{\rho,r}(x))(p_{\rho,r}),$$
où, $$\Sigma:KK^{\mathrm{ban}}\big(C_0(X)\G, A\G\big)\to \mathrm{Hom}\Big(K\big(C_0(X)\G\big),K\big(A\G\big)\Big),$$ 
est donné par l'action de $KK^{\mathrm{ban}}$ sur la $K$-théorie; de même pour les produits croisés tordus réduits.\\

 Si $A=\CC$, on note $\mu_{\rho}$ (resp. $\mu_{\rho,r}$) le morphisme $\mu^{\CC}_{\rho}$ (resp. $\mu^{\CC}_{\rho,r}$) pour simplifier les notations. Nous allons montrer que $\mu_{\rho}$ est un isomorphisme pour une large classe de groupes vérifiant la conjecture de Baum-Connes et qui contient des groupes ayant la propriété (T).\\

Tout le long de cet article, un groupe localement compact $G$ sera toujours dénombrable à l'infini. Une représentation $\rho$ de dimension finie de $G$ sera une représentation de $G$ sur un espace vectoriel complexe de dimension finie muni d'une norme hermitienne. On note $(\rho, V)$ toute représentation de $G$ sur un espace $V$, pour simplifier les notations et quand on veut préciser l'espace sur lequel $G$ agit. Si $A$ est une $C^*$-algèbre, on note $\widetilde{A}$ son algèbre unitarisée.

\section{Morphisme de Baum-Connes tordu}\label{complétinc}
\subsection{Complétions inconditionnelles}
On rappelle la définition de complétion inconditionnelle introduite dans \cite{Lafforgue02}.
\begin{Def}\label{complétionincond}
 Soit $G$ un groupe localement compact. Une algèbre de Banach $\mathcal{B}(G)$ est une complétion inconditionnelle de $C_c(G)$, si elle contient $C_c(G)$ comme sous-algèbre dense et si, quels que soient $f_1,f_2\in C_c(G)$ tels que $|f_1(g)|\leq|f_2(g)|$ pour tout $g\in G$, on a $\|f_1\|_{\mathcal{B}(G)}\leq\|f_2\|_{\mathcal{B}(G)}$, autrement dit, si pour tout $f\in C_c(G)$, $\|f\|_{\mathcal{B}(G)}$ ne dépend que de $(g\mapsto|f(g)|)$.
\end{Def}
\begin{Def}\label{fplein}
 Soit $B$ une algèbre de Banach et $C$ une sous-algèbre dense. Soit $A$ une sous-algèbre de $B$ qui est une complétion de $C$ pour une norme telle que $\|x\|_{B}\leq\|x\|_{A}$ pour tout $x\in C$. L'algèbre $A$ est une sous-algèbre faiblement pleine de $B$ relativement à $C$, si pour tout $n\in\NN^*$ et pour tout $x\in \mathrm{M}_n(C)$,
$$\rho_{\mathrm{M}_n(A)}(x)=\rho_{\mathrm{M}_n(B)}(x),$$
où $\rho$ denote le rayon spectral.
\end{Def}
\begin{Rem}
Par définition une sous-algèbre faiblement pleine d'une algèbre de Banach est dense.
\end{Rem}
L'intérêt de cette notion est donné par la proposition suivante démontrée dans \cite[Lemme 1.7.2]{Lafforgue02}
\begin{Pro}\label{Kthsurjectif}
 Soient $A, B, C$ comme dans la définition \ref{fplein}. Notons $\theta:C\rightarrow B$ et $\theta_1:C\rightarrow A$ les inclusions évidentes. Soit $\tau$ l'unique morphisme d'algèbres de Banach de $A$ dans $B$ tel que $\tau\circ\theta_1=\theta$. Si $A$ est une sous-algèbre faiblement pleine de $B$ alors le morphisme induit par $\tau$ en $K$-théorie
$$\tau_*:K(A)\rightarrow K(B),$$
est surjectif.
\end{Pro}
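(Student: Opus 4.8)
The plan is to prove surjectivity first on $K_0$ by a holomorphic functional calculus argument, and then to deduce the $K_1$ statement by applying the very same argument to the suspensions of $A$ and $B$. The conceptual point is that the weak fullness hypothesis of Definition \ref{fplein} does \emph{not} give invertibility transfer from $B$ to $A$ (which would be the much stronger notion of spectral invariance), but it does give exactly the control on spectral radii of elements of $\mathrm{M}_n(C)$ needed to run holomorphic functional calculus inside $A$.

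For $K_0$, recall that a class of $K_0(B)$ is represented by an idempotent $e\in\mathrm{M}_n(\widetilde{B})$ whose image in $\mathrm{M}_n(\CC)$ is a fixed scalar idempotent $p_0$, so that $e=p_0+b$ with $b\in\mathrm{M}_n(B)$. Using that $C$ is dense in $B$, I would approximate $b$ by some $c\in\mathrm{M}_n(C)$ and set $a=p_0+c$, so that $a^2-a\in\mathrm{M}_n(C)$ and $\|a^2-a\|_B$ is as small as we wish. Now weak fullness gives
$$\rho_{\mathrm{M}_n(A)}(a^2-a)=\rho_{\mathrm{M}_n(B)}(a^2-a)\leq\|a^2-a\|_B,$$
and the spectral mapping theorem applied to the polynomial $z\mapsto z^2-z$ in the Banach algebra $\mathrm{M}_n(\widetilde{A})$ forces
$$\sigma_{\mathrm{M}_n(A)}(a)\subseteq\{z\in\CC:|z|\,|z-1|\leq\varepsilon\},$$
which for $\varepsilon$ small is a union of two small disks around $0$ and $1$. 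Choosing a contour $\gamma$ encircling the disk around $1$ and avoiding the one around $0$, the idempotent
$$q=\frac{1}{2\pi i}\oint_{\gamma}(z-a)^{-1}\,dz$$
is a well-defined element of $\mathrm{M}_n(\widetilde{A})$. Since $\tau$ is continuous and the same $\gamma$ also separates $\sigma_{\mathrm{M}_n(B)}(a)$, one gets $\tau(q)=\chi(a)$ in $\mathrm{M}_n(\widetilde{B})$, where $\chi$ is holomorphic and equals $1$, resp. $0$, near $1$, resp. $0$; as $\chi(a)$ is close to $\chi(e)=e$, the idempotents $\tau(q)$ and $e$ are similar, whence $[e]=\tau_*[q]$. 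This gives surjectivity of $\tau_*$ on $K_0$.

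For $K_1$, I would observe that weak fullness is inherited by suspensions. Writing $SD=C_0((0,1),D)$ and $SC=C_0((0,1),C)$, an element $f\in\mathrm{M}_n(SC)$ takes pointwise values in $\mathrm{M}_n(C)$, and the spectral radius in $\mathrm{M}_n(S\widetilde{A})$, resp. $\mathrm{M}_n(S\widetilde{B})$, is the supremum over $t$ of the pointwise spectral radii; the weak fullness of $A$ in $B$ at level $\mathrm{M}_n$ then yields that of $SA$ in $SB$. The $K_0$ argument above, applied to the triple $(SA,SB,SC)$, shows that $K_0(SA)\to K_0(SB)$ is surjective, and the naturality of the suspension isomorphism $K_1\cong K_0\circ S$ for Banach algebra $K$-theory transports this into surjectivity of $\tau_*$ on $K_1$.

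The main obstacle, and the heart of the argument, is the passage from the \emph{norm} smallness of $a^2-a$ in $B$ to \emph{spectral} control of $a$ inside $A$: this is exactly where weak fullness enters, and it is the reason one works with $a^2-a$ (a specific element of $\mathrm{M}_n(C)$ of small spectral radius) rather than trying to invert $z-a$ directly, for which spectral radius equality would be useless. Note finally that the method yields only surjectivity: injectivity would require lifting \emph{homotopies} of idempotents and invertibles from $B$ to $A$, and for this the spectral radius equality alone does not suffice.
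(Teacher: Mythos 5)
The paper does not actually prove this proposition: it is quoted from \cite[Lemme 1.7.2]{Lafforgue02}, so there is no in-text argument to compare against, only a citation. Your proof is correct, and for $K_0$ it is exactly the standard argument behind Lafforgue's lemma: approximate an idempotent $e=p_0+b$ of $\mathrm{M}_n(\widetilde{B})$ by $a=p_0+c$ with $c\in\mathrm{M}_n(C)$, observe that $a^2-a\in\mathrm{M}_n(C)$ has small norm in $\mathrm{M}_n(B)$, transfer this to a small spectral radius in $\mathrm{M}_n(A)$ by weak fullness, confine $\sigma_{\mathrm{M}_n(\widetilde{A})}(a)$ to two small disks by spectral mapping, and take the Riesz idempotent; you rightly stress that the hypothesis is applied to the single element $a^2-a$ of $\mathrm{M}_n(C)$ and not used to invert $z-a$ directly. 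For $K_1$ you pass to suspensions; this works, but it rests on two points you should make explicit: the dense subalgebra must be taken to be something like the algebraic tensor product $C_c((0,1))\otimes C$ (the notation $C_0((0,1),C)$ is ambiguous, since $C$ carries two inequivalent norms, and one needs the pointwise values to lie in $\mathrm{M}_n(C)$ for weak fullness to apply fibrewise), and the identity $\rho(f)=\sup_t\rho(f(t))$ for $f$ in a suspension, which is true but deserves a line of proof. A route that avoids suspensions altogether, and is closer in spirit to the direct treatment of invertibles: reduce to $u=1+b$ with $b\in\mathrm{M}_n(B)$, approximate $u$ and $u^{-1}$ by $v=1+c$ and $w=1+c'$ with $c,c'\in\mathrm{M}_n(C)$; then $vw-1$ and $wv-1$ belong to $\mathrm{M}_n(C)$ and have small $B$-norm, hence spectral radius strictly less than $1$ in $\mathrm{M}_n(A)$ by weak fullness, so $vw$ and $wv$ are invertible in $\mathrm{M}_n(\widetilde{A})$, hence so is $v$, and $\tau(v)$ is homotopic to $u$. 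Either way the conclusion stands, and your closing remark that the method yields only surjectivity, not injectivity, is accurate.
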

\begin{Rem}
 La notion de \emph{faiblement pleine} est un peu plus faible que le fait d'être dense et stable par calcul fonctionnel holomorphe, mais, grâce à la proposition \ref{Kthsurjectif}, elle est suffisante pour nos propos. On peut comparer cette notion avec la notion de sous-algèbre dense et \emph{pleine} définie dans \cite[Définition 4.4.5]{Lafforgue02}. Voir aussi le théorème A.2.1 et la proposition A.2.2 de \cite{Bost90}.
\end{Rem}

On veut montrer le théorème suivant
\begin{Theo}
 Soit $G$ un groupe localement compact et $(\rho,V)$ une représentation de dimension finie de $G$. Soit $C_c(G)$ l'espace des fonctions continues à support compact sur $G$. S'il existe une sous-algèbre faiblement pleine de $C^*_r(G)$ qui est une complétion inconditionnelle de $C_c(G)$, alors il existe une sous-algèbre faiblement pleine de $\mathcal{A}^{\rho}_r(G)$ qui est aussi une complétion inconditionnelle de $C_c(G)$.
\end{Theo}

Soient $G$ un groupe localement compact et $(\rho,V)$ une représentation de dimension finie de $G$. Supposons qu'il existe $\mathcal{B}(G)$ une complétion inconditionnelle de $C_c(G)$ qui est une sous-algèbre faiblement pleine de $C_r^*(G)$ et notons $\mathfrak{i}$ l'inclusion.\\
On note $\mathcal{B}(G,\mathrm{End}(V))$ la complétion de $C_c(G,\mathrm{End}(V))$ pour la norme
$$\|f\|_{\mathcal{B}(G,\mathrm{End}(V))}=\|g\mapsto\|f(g)\|_{\mathrm{End}(V)}\|_{\mathcal{B}(G)},$$
pour $f\in C_c(G,\mathrm{End}(V))$.\\
D'après \cite[Proposition 1.6.4]{Lafforgue02}, il existe un morphisme d'algèbres de Banach de $\mathcal{B}(G,\mathrm{End}(V))$ dans $C^*_r(G,\mathrm{End}(V))$ prolongeant $\mathrm{Id}_{C_c(G,\EEnd(V))}$. On note $\tau$ ce morphisme.
\begin{Lemma}\label{Morita}
L'algèbre de Banach $\mathcal{B}(G,\mathrm{End}(V))$ est une sous-algèbre faiblement pleine de $C^*_r(G,\mathrm{End}(V))$.
\end{Lemma}

\begin{proof}
Soit $m=\mathrm{dim}_{\CC}(V)$. Comme $\mathcal{B}(G)$ est faiblement pleine dans $C^*_r(G)$, on a
$$\rho_{\mathrm{M}_n(\mathcal{B}(G))}(x)=\rho_{\mathrm{M}_n(C^*_r(G))}(x),$$
pour tout $n\in\NN^*$ et pour tout $x\in \mathrm{M}_n(C_c(G))$. Donc, pour tout $k\in\NN^*$, et pour tout $x\in\mathrm{M}_{km}(C_c(G))$,
$$\rho_{\mathrm{M}_{k}(\mathrm{M}_{m}(\mathcal{B}(G)))}(x)=\rho_{\mathrm{M}_{k}(\mathrm{M}_{m}(C^*_r(G)))}(x).$$
Or, on a des isomorphismes d'algèbres de Banach à équivalence de norme près (resp. de $C^*$-algèbres)
\begin{align*}
\mathcal{B}(G,\EEnd(V))\simeq\mathrm{M}_m(\mathcal{B}(G))\quad\hbox{et}&\quad C^*_r(G,\EEnd(V))\simeq\mathrm{M}_m(C^*_r(G)),
\end{align*}
et donc ceci implique que $\mathcal{B}(G,\EEnd(V))$ est faiblement pleine dans $C^*_r(G,\EEnd(V))$.
\end{proof}

Considérons maintenant la complétion de $C_c(G)$ pour la norme donnée par:
$$\|f\|_{\mathcal{B}^{\rho}(G)}=\|g\mapsto|f(g)|\|\rho(g)\|_{\mathrm{End}(V)}\|_{\mathcal{B}(G)},$$
pour $f\in C_c(G)$. On note cette algèbre $\mathcal{B}^{\rho}(G)$ et on remarque que c'est une complétion inconditionnelle.

\begin{Theo}\label{ssalpondérée}
 L'algèbre $\mathcal{B}^{\rho}(G)$ est une sous-algèbre faiblement pleine de $\mathcal{A}^{\rho}_r(G)$.
\end{Theo}

\begin{proof}
On considère l'application suivante:
\begin{align*}
C_c(G)&\rightarrow C_c(G,\EEnd(V))\\
f&\mapsto (g\mapsto f(g)\rho(g)),
\end{align*}
et on note $\tau_{1}$ le morphisme d'algèbres de Banach de $\mathcal{B}^{\rho}(G)$ dans $\mathcal{B}(G,\mathrm{End}(V))$ qu'elle induit.\\
Soit $\tau_2:\mathcal{A}_r^{\rho}(G)\rightarrow C^*_r(G)\otimes\EEnd(V)$ le morphisme isométrique induit par l'application $f\mapsto\int_G f(g)e_g\otimes\rho(g)dg$.
On a le diagramme commutatif suivant:

$$\xymatrix{
    \mathcal{B}(G,\EEnd(V))\ar[r]_{\tau} & C^*_r(G,\EEnd(V))\ar[r]^{\simeq}& C^*_r(G)\otimes\EEnd(V)\\
\mathcal{B}^{\rho}(G)\ar[rr]_{\psi}\ar[u]_{\tau_1}& & \mathcal{A}_r^{\rho}(G)\ar[u]_{\tau_2},
}$$
où $\psi:\mathcal{B}^{\rho}(G)\rightarrow \mathcal{A}_r^{\rho}(G)$ est l'unique morphisme continu d'algèbres de Banach prolongeant $\mathrm{Id}_{C_c(G)}$.
On veut montrer que $\psi$ est un morphisme faiblement plein.\\

Le morphisme $\tau_1:\mathcal{B}^{\rho}(G)\rightarrow\mathcal{B}(G,\EEnd(V))$ est isométrique, donc pour tout pour tout $n\in\NN^*$ et $x\in\mathrm{M_n}(C_c(G))$,
 \begin{align*}
  \rho_{\mathrm{M_n}(\mathcal{B}^{\rho}(G))}(x)&=\rho_{\mathrm{M_n}(\mathcal{B}(G,\EEnd(V)))}(\mathrm{M_n}(\tau_1)(x)).
\end{align*}
De plus, comme $\mathcal{B}(G)$ est pleine dans $C^*_r(G)$, le lemme \ref{Morita} implique,
\begin{align*}
\rho_{\mathrm{M_n}(\mathcal{B}(G,\EEnd(V)))}(\mathrm{M_n}(\tau_1)(x))&=\rho_{\mathrm{M_n}(C^*_r(G,\EEnd(V))}(\mathrm{M_n}(\tau\circ\tau_1)(x))
\end{align*}
et par commutativité du diagramme on a
$$\rho_{\mathrm{M_n}(C^*_r(G,\EEnd(V)))}(\mathrm{M_n}(\tau\circ\tau_1)(x))=\rho_{\mathrm{M_n}(C^*_r(G)\otimes\EEnd(V))}(\mathrm{M_n}(\tau_2\circ\psi)(x)).$$
Or, comme $\tau_2$ est isométrique le membre de droite de la dernière égalité est égal à $\rho_{\mathrm{M_n}(\mathcal{A}_r^{\rho}(G))}(\mathrm{M_n}(\psi)(x))$, d'où l'égalité
\begin{align*}
\rho_{\mathrm{M_n}(\mathcal{B}^{\rho}(G))}(x)&=\rho_{\mathrm{M_n}(\mathcal{A}_r^{\rho}(G))}(\mathrm{M_n}(\psi)(x)),\\
&=\rho_{\mathrm{M_n}(\mathcal{A}_r^{\rho}(G))}(x)
\end{align*}
la dernière égalité provenant du fait que $x\in\mathrm{M_n}(C_c(G))$ et que $\mathrm{M_n}(\psi)$ prolonge $\mathrm{Id}_{\mathrm{M_n}(C_c(G))}$.
\end{proof}

\subsection{Applications à Baum-Connes tordu}
On donne maintenant une proposition qui nous permettra d'appliquer les résultats précédents à l'étude de la bijectivité du morphisme de Baum-Connes tordu réduit. Étant donné une $G$-$C^*$-algèbre $B$ et une compétion inconditionnelle $\mathcal{B}(G)$, on note $\mathcal{B}^{\rho}(G,B)$ la complétion de $C_c(G,B)$ pour la norme
$$\|f\|_{\scriptstyle\mathcal{B}^{\rho}(G,B)}=\|g\mapsto\|f(g)\|_B\|\rho(g)\|_{\mathrm{End}(V)}\|_{\mathcal{B}(G)},$$
pour $f\in C_c(G,B)$. 
\begin{Pro}\label{BCcomplétions}
Pour toute $G$-$C^*$-algèbre $B$, on a le diagramme commutatif suivant
$$\xymatrix {K^{\mathrm{top}}(G,B)\ar[r]^{\mu^B_{\mathcal{B}^{\rho}}}\ar[dr]^{\mu^B_{\rho,r}}& K(\mathcal{B}^{\rho}(G,B))\ar[d]^{\psi_{B,*}}\\
&K(B\rG)},$$
où $\mu^B_{\mathcal{B}^{\rho}}$ est la variante du morphisme de Baum-Connes à coefficients définie dans \cite[Section 1.7.1]{Lafforgue02} pour les complétions inconditionnelles, et $\psi_B:\mathcal{B}^{\rho}(G,B)\rightarrow \mathcal{A}_r^{\rho}(G,B)$ est l'unique morphisme d'algèbres de Banach prolongeant l'identité.
\end{Pro}
\begin{proof}
 La démonstration est analogue à celle de la proposition 1.7.6 de \cite{Lafforgue02}. Soit $A$ une $G$-$C^*$-algèbre et soit $$\psi_A:\mathcal{B}^{\rho}(G,A)\rightarrow A\rG,$$ prolongeant l'identité sur $C_c(G,A)$. On rappelle que l'on note $\iota$ l'application $$\iota:KK_G(A,B)\rightarrow KK_G^{\mathrm{ban}}(A,B),$$définie par Lafforgue (cf. \cite[Proposition 1.6.1]{Lafforgue02}).\\
On a alors que, pour tout élément $\alpha\in KK_G(A,B)$, $$\psi_{B,*}(j_{\mathcal{B}^{\rho}}(\iota(\alpha)))=\psi^*_A(j^{\rho}_r(\alpha)),$$ dans $KK^{\mathrm{ban}}(\mathcal{B}^{\rho}(G,A),B\rG)$.\\
 En effet, on construit une homotopie entre les éléments $\psi_{B,*}(j_{\mathcal{\rho}}(\iota(\alpha)))$ et $\psi^*_A(j^{\rho}_r(\alpha))$ de $E^{\mathrm{ban}}(\mathcal{B}^{\rho}(G,A),B\rG)$, à l'aide de cônes (voir \cite{Lafforgue02}, \cite{Gomez08}). Soit $(E,T)$ un représentant de $\alpha$ dans $KK_G(A,B)$. On considère l'application
\begin{align*}
C_c(G,E)\otimes C_c(G,B)&\to C_c(G,E) \\
x\otimes b&\mapsto x.b.
\end{align*}
Grâce au lemme 1.6.6 de \cite{Lafforgue02}, on montre facilement qu'elle peut être prolongée en un morphisme de $B\rG$-modules de Banach à droite
\begin{align*}
 \tau:\mathcal{B}^{\rho}(G,E)^>\otimes^{\pi}_{\widetilde{\mathcal{B}^{\rho}(G)}}\widetilde{B\rG}&\to (E\rG)^>,
\end{align*}
qui est de norme inférieure ou égale à $1$. On pose $$\mathcal{C}(\tau)^>:=\{(h,x)\in(E\rG)[0,1]\times\psi_{B,*}\big(j_{\mathcal{B}^{\rho}}(\iota(\alpha))\big)^>\,|\, h(0)=\tau(x)\},$$
le cône associé à ce morphisme. De même, on définit $\mathcal{C}(\tau)^>$, comme le cône associé au morphisme
$$\overline{\tau}:\widetilde{B\rG}\otimes^{\pi}_{\widetilde{\mathcal{B}^{\rho}(G)}}\mathcal{B}^{\rho}(G,E)^<\to (E\rG)^<.$$
Soit $\mathcal{C}(\tau,T)$, l'opérateur sur $\mathcal{C}(\tau)$ défini par
$$\mathcal{C}(\tau,T)^>(h,e\otimes b)=\Big((g,t)\mapsto T(h(t)(g)), \big(g\mapsto T(e(g))\big)\otimes b\Big),$$
pour $h\in (E\rG)^>[0,1]$ et $x=e\otimes b\in \psi_{B,*}\big(j_{\mathcal{B}^{\rho}}(\iota(\alpha))\big)$; de même pour $\mathcal{C}(\tau,T)^<$.\\
On a alors que $(\mathcal{C}(\tau),\mathcal{C}(\tau,T))$ appartient à $E^{\mathrm{ban}}\big(\mathcal{B}^{\rho}(G,A),(B\rG)[0,1]\big)$. En effet, soit $\Phi$ l'application
\begin{align*}
 \Phi:C_c(G,\mathcal{K}(E))&\to\mathcal{L}(\mathcal{C}(\tau))\\
S&\mapsto \big((h,x)\to(t\mapsto \widehat{S_{\rho}}(h(t)),\psi_{B,*}(\widehat{S})x\big),
\end{align*}
où $\widehat{S_{\rho}}$ et $\widehat{S}$ sont les élément de $\mathcal{L}_{B\rG}(E\rG)$ et de $\mathcal{L}_{\mathcal{B}^{\rho}(G)}(\mathcal{B}^{\rho}(G,E))$ définis à partir de $S\in C_c(G,\mathcal{K}(E)$ dans \cite[Définition 2.5]{Gomez08} et \cite[Lemme 1.5.6]{Lafforgue02}, respectivement. On montre alors que l'image de $\Phi$ est contenue dans les opérateurs compacts de $\mathcal{C}(\tau)$, en remarquant les deux faits suivants
\begin{enumerate}
\item Si $E$ est un $G$-$(A,B)$-bimodule de Banach et $S=(S_g)_{g\in G}$ appartient à $C_c(G,\mathcal{K}(E))$, alors
\begin{align*}
\|\widehat{S_{\rho}}\|_{\mathcal{L}(E\rtimes^{\rho}_rG)}&\leq\|g\mapsto\|S_g\|_{\mathcal{K}(E)}\|\rho(g)\|_{\mathrm{End}(V)}\|_{L^1(G)} \\
&=\|g\mapsto\|S_g\|_{\mathcal{K}(E)}\|_{L^{1,\rho}(G)},
\end{align*}
où on note $L^{1,\rho}(G)$ la complétion de $C_c(G)$ pour la norme $L^1$ pondérée
$$\|f\|_{L^{1,\rho}(G)}=\int_G|f(g)|\|\rho(g)\|_{\mathrm{End}(V)}dg,$$
qui est une complétion inconditionnelle de $C_c(G)$.\\
\item Dans le lemme 1.5.6 de \cite{Lafforgue02}, on peut choisir les $y_i$ et les $\xi_i$ tels que
$$\|g\mapsto\|S_g-S_{0,g}\|_{\mathcal{K}(E)}\|_{\mathcal{B}^{\rho}(G)}+\|g\mapsto\|S_g-S_{0,g}\|_{\mathcal{K}(E)}\|_{L^{1,\rho}(G)}\leq\epsilon.$$
\end{enumerate}
Il est ensuite facil de voir que, pour $a\in \mathcal{B}^{\rho}(G,A)$ et $g\in G$, 
\begin{align*}
 \Phi(S_1)&=[a,\mathcal{C}(\tau,T)],\\
\Phi(S_2)&=a\big(1-(\mathcal{C}(\tau,T))^2\big),\\
\text{et}\quad\Phi(S_3)&=a\big(g(\mathcal{C}(\tau,T))-\mathcal{C}(\tau,T)\big),
\end{align*}
où pour $a\in C_c(G,A)$ et $g\in G$, 
\begin{align*}
&S_1:=\big(t\mapsto a(t)(t(T)-T)+[a(t),T]\big),\\
&S_2:=\big(t\mapsto a(t)t(1-T^{2})\big),\\
\hbox{et}\quad&S_3:=\big(t\mapsto a(t)t((gT)-T)\big),
\end{align*}
de sorte que $S_i\in C_c(G,\mathcal{K}(E))$ pour $i=1,..,3$. On a donc que
$$[a,\mathcal{C}(\tau,T)],\quad a\big(1-(\mathcal{C}(\tau,T))^2\big)\quad\text{et}\quad a\big(g(\mathcal{C}(\tau,T))-\mathcal{C}(\tau,T)\big),$$
appartiennent à l'image de $\Phi$ (voir aussi \cite[Lemme 2.7 et Lemme 2.14]{Gomez08}). Il est clair alors que $\mathcal{C}(\tau,T)$ réalise une homotopie entre les éléments $\psi_{B,*}(j_{\mathcal{\rho}}(\iota(\alpha)))$ et $\psi^*_A(j^{\rho}_r(\alpha))$.\\

En particulier, pour tout sous-espace $X$ de $\underline{E}G$ fermé et $G$-compact, on a l'égalité $$\psi_{B,*}(j_{\mathcal{B}^{\rho}}(\iota(\alpha)))=\psi^*_{C_0(X)}(j^{\rho}_r(\alpha)),$$ dans $KK^{\mathrm{ban}}(\mathcal{B}^{\rho}(G,C_0(X)),B\rG)$ pour tout $\alpha\in KK_G(C_0(X),B)$. Ceci implique alors que $\mu_{\rho,r}=\psi_{B,*}\circ\mu_{\mathcal{B}^{\rho}}$.
\end{proof}

Nous pouvons maintenant montrer le résultat principal de cette section qui est donné par le théorème suivant
\begin{Theo}\label{complétions}
 Soit $G$ un groupe localement compact satisfaisant les deux conditions suivantes:
 \begin{enumerate}
 \item il existe une complétion inconditionnelle de $C_c(G)$ qui est une sous-algèbre faiblement pleine de $C^*_r(G)$;
  \item pour toute complétion inconditionnelle $\mathcal{B}(G)$ de $C_c(G)$ le morphisme défini par Lafforgue $\mu_{\mathcal{B}}: K^{\mathrm{top}}(G)\rightarrow K(\mathcal{B}(G))$ est un isomorphisme.
      \end{enumerate}
 Soit $\rho$ une représentation de dimension finie de $G$. Alors le morphisme de Baum-Connes réduit tordu par rapport à $\rho$ $$\mu_{\rho,r}:K^{\mathrm{top}}(G)\rightarrow K(\mathcal{A}^{\rho}_r(G)),$$
est un isomorphisme.
\end{Theo}
\begin{proof}
Soit $\mathcal{B}(G)$ une complétion inconditionnelle de $C_c(G)$ stable par calcul fonctionnel holomorphe dans $C^*_r(G)$. Soit $\mathcal{B}^{\rho}(G)$ la complétion inconditionnelle de $C_c(G)$ définie comme ci-dessus. Alors, d'après le théorème \ref{ssalpondérée}, le morphisme d'algèbres de Banach $\psi:\mathcal{B}^{\rho}(G)\rightarrow\mathcal{A}^{\rho}_r(G)$ induit un isomorphisme en $K$-théorie. De plus, d'après la proposition \ref{BCcomplétions}, $\mu_{\rho,r}=\psi_*\circ\mu_{\mathcal{B}^{\rho}}$, donc $\mu_{\rho,r}$ est bien un isomorphisme.
\end{proof}
Dans \cite{Lafforgue02}, Lafforgue a démontré que les groupes appartenant à la classe $\mathcal{C}'$ vérifient la condition 2 du théorème \ref{complétions} (cf. \cite[Théorème 0.0.2]{Lafforgue02}). De plus, il a montré que si $G$ est un groupe de Lie réductif réel, une variante de l'algèbre de Schwartz généralisée (cf. \cite[Chapitre 4]{Lafforgue02}), qui est une complétion inconditionnelle de $C_c(G)$, est aussi une sous-algèbre de Banach faiblement pleine de $C^*_r(G)$. De plus, si un groupe discret $\Gamma$ a la propriété (RD), une variante de l'algèbre de Jolissaint, $H^s(\Gamma)$, qui est aussi une complétion inconditionnelle, est une sous-algèbre de Banach faiblement pleine de $C^*_r(\Gamma)$. On a alors le corollaire suivant
\begin{Cor}
 Pour toute représentation de dimension finie $\rho$, le morphisme de Baum-Connes réduit tordu $\mu_{\rho,r}$ est un isomorphisme pour les groupes suivants:
\begin{itemize}
 \item les groupes réductifs réels,
\item tous les groupes discrets appartenant à la classe $\mathcal{C}'$ et possédant la propriété (RD), donc, en particulier les sous-groupes discrets cocompacts de $Sp(n,1)$, $F_{4(-20)}$, $SL_3(F)$ où $F$ est un corps local, $SL_3(\mathbb{H})$ et $E_{6(-26)}$, et tous les groupes hyperboliques.
\end{itemize}
\end{Cor}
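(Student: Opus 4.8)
The plan is to derive this corollary from the theorem just proved, by verifying, for each of the two families of groups in the statement, that its two hypotheses are met. Condition~2 presents no difficulty of its own: by Theorem~0.0.2 of \cite{Lafforgue02}, for every group in the class $\mathcal{C}'$ and every unconditional completion $\mathcal{B}(G)$ of $C_c(G)$, the morphism $\mu_{\mathcal{B}}:K^{\mathrm{top}}(G)\to K(\mathcal{B}(G))$ is an isomorphism. So I would first record that all the groups at hand lie in $\mathcal{C}'$: real reductive groups are, up to an abelian factor, real semisimple Lie groups, while the discrete groups in the statement (hyperbolic groups and cocompact lattices in the listed groups) are closed subgroups of such groups, or act properly, isometrically and cocompactly on a complete simply connected nonpositively curved manifold or on an affine Bruhat--Tits building; in either case they fall under the description of $\mathcal{C}'$ recalled in the introduction.

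It then remains to establish Condition~1, namely the existence of an unconditional completion of $C_c(G)$ that is weakly full in $C^*_r(G)$. For a real reductive group $G$, I would invoke Lafforgue's variant of the generalized Schwartz algebra (\cite{Lafforgue02}, Chapter~4): by construction it is an unconditional completion of $C_c(G)$, and Lafforgue shows that it is a dense subalgebra of $C^*_r(G)$ stable under holomorphic functional calculus. By the remark following Proposition~\ref{Kthsurjectif}, any such subalgebra is in particular weakly full, so Condition~1 holds and the theorem just proved yields the bijectivity of $\mu_{\rho,r}$ at once.

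For a discrete group $\Gamma$ with property (RD), I would fix a suitable length function $\ell$ and consider, for $s$ large enough, the Sobolev algebra $H^s(\Gamma)$ obtained by completing $\CC[\Gamma]$ for the norm $\|f\|_{H^s}=\big(\sum_{\gamma\in\Gamma}|f(\gamma)|^2(1+\ell(\gamma))^{2s}\big)^{1/2}$. Since this norm depends only on the function $\gamma\mapsto|f(\gamma)|$, the algebra $H^s(\Gamma)$ is an unconditional completion; property (RD) is precisely what guarantees that, for $s$ large enough, $H^s(\Gamma)$ embeds continuously into $C^*_r(\Gamma)$ as a dense subalgebra stable under holomorphic functional calculus (cf.\ \cite{Jolissaint}), hence weakly full by the remark following Proposition~\ref{Kthsurjectif}. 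The theorem just proved then applies. Finally, to pin down the explicit examples, I would cite that property (RD) holds for all hyperbolic groups and for cocompact lattices in $Sp(n,1)$, $F_{4(-20)}$, $SL_3(F)$ (with $F$ a local field), $SL_3(\mathbb{H})$ and $E_{6(-26)}$ (cf.\ \cite{Lafforgue00}, \cite{Chatterji03}, \cite{delaHarpe-Valette}).

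The main obstacle lies not in the argument itself --- which reduces to a single application of the theorem just proved, once its two hypotheses are in hand --- but in the external inputs it relies on: the proof of property (RD) for higher-rank lattices such as those in $SL_3(F)$, together with the holomorphic-functional-calculus stability of Lafforgue's Schwartz algebra and of the Jolissaint--Sobolev algebras. These are the deep results, which persist even when $G$ has Kazhdan's property (T), that make the list of examples genuinely nontrivial.
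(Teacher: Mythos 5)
Your proposal is correct and follows essentially the same route as the paper: both deduce the corollary from the preceding theorem by invoking Lafforgue's Th\'eor\`eme 0.0.2 for condition 2 on the class $\mathcal{C}'$, and verify condition 1 via the variant of the generalized Schwartz algebra in the reductive case and the Jolissaint algebra $H^s(\Gamma)$ in the property (RD) case, with the same references (\cite{Lafforgue00}, \cite{Chatterji03}, \cite{delaHarpe-Valette}) for the explicit lattices and hyperbolic groups. The only difference is cosmetic: you pass through density and stability under holomorphic functional calculus before concluding weak fullness (consistent with the remark after Proposition \ref{Kthsurjectif}), whereas the paper asserts the weak fullness of these completions directly.
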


\section{Action sur $K^{\mathrm{top}}(G)$ par le produit tensoriel par $\rho$}
\subsection{Définitions et énoncé du théorème principal}
Étant donné un groupe localement compact $G$ et une représentation de dimension finie $(\rho,V)$ de $G$, nous allons considérer la longueur $\ell$ sur $G$ définie de la façon suivante: pour tout $g\in G$ on pose $$\ell(g):=\max\big(\log(\|\rho(g^{-1})\|_{\End(V)}),\log(\|\rho(g)\|_{\End(V)})\big).$$
 On a alors $$\|\rho(g)v\|_{V}\leq e^{\ell(g)}\|v\|_{V},$$ pour tout $v\in V$ et pour tout $g\in G$. Le couple $(V,0)$ définit alors un élément de $E^\mathrm{ban}_{G,\ell}(\CC,\CC)$ (cf. \cite[Définition 1.2.2]{Lafforgue02}). On note $[V]$ sa classe dans $KK^\mathrm{ban}_{G,\ell}(\CC,\CC)$.
\begin{Def}\label{otimesV}
Soit $A$ une $G$-$C^*$-algèbre. La représentation $(\rho,V)$ de $G$ définit un morphisme de groupes de $K(A\rG)$ dans $K(C^*_r(G,A))$. En effet, soit $C^*_r(G,A)\otimes V$ le $C^*_r(G,A)\otimes\CC$-module hilbertien construit par produit tensoriel externe. D'après la définition de $A\rG$, il est clair que l'application $$e_g\mapsto\big(h\otimes v\mapsto e_g*h\otimes\rho(g)v\big),$$
 pour tout $g\in G$, $h\in C_c(G,A)$ et $v\in V$, induit un morphisme d'algèbres de Banach $$A\rG\rightarrow \mathcal{L}_{C^*_r(G,A)}(C^*_r(G,A)\otimes V).$$ Le produit tensoriel hilbertien $C^*_r(G,A)\otimes V$ est alors un $(A\rG,C^*_r(G,A))$-bimodule de Banach (cf. \cite{Lafforgue02}), et le couple $(C^*_r(G,A)\otimes V,0)$ définit alors un élément de $E^{\mathrm{ban}}(A\rG,C^*_r(G,A))$. On note $[C^*_r(G,A)\otimes V]$ sa classe dans $KK^{\mathrm{ban}}(A\rG,C^*_r(G,A))$. Soit $$\Sigma: KK^{\mathrm{ban}}(A\rG,C^*_r(G,A))\rightarrow \mathrm{Hom}\big(K(A\rG),K(C^*_r(G,A))\big),$$ donné par l'action de la $KK$-théorie banachique sur la $K$-théorie (cf. \cite[Proposition 1.2.9]{Lafforgue02}). On a alors un morphisme 
$$\Sigma([C^*_r(G,A)\otimes V]):K(A\rG)\rightarrow K(C^*_r(G,A)),$$
que l'on note $\Lambda_{\rho,r}$.
\end{Def}
\begin{Rem}
 La représentation $(\rho,V)$ étant de dimension finie, on aurait pu définir le morphisme précédent de la manière suivante:
soit $\tau$ le morphisme d'algèbres de Banach de $A\rG$ dans $C^*_r(G,A)\otimes\mathrm{End}(V)$ induit par l'application
$$e_g\mapsto e_g\otimes\rho(g).$$
On note $\tau_*$ le morphisme de groupes induit par $\tau$ de $K(A\rG)$ dans $K\big(C^*_r(G,A)\otimes\mathrm{End}(V)\big)$. Comme $$C^*_r(G,A)\otimes\mathrm{End}(V)\simeq M_n(C^*_r(G,A))$$ où $n$ est la dimension de l'espace vectoriel $V$, alors, par équivalence de Morita, $\tau$ induit un morphisme,
$$\tau_*:K(A\rG)\rightarrow K(C^*_r(G,A)).$$
Il est facile de voir que $\tau_*$ est égal au morphisme $\Lambda_{\rho,r}$ défini par la proposition \ref{otimesV}.
\end{Rem}

\begin{Rem}
De façon analogue, $(\rho,V)$ définit un morphisme de groupes $$\Lambda_{\rho}:K(A\G)\rightarrow K(C^*(G,A)),$$
où $A\G$ est le produit croisé tordu maximal.  \\
\end{Rem}

D'autre part, une action de $R_F(G)$ sur $K^{\mathrm{top}}(G)$ est définie de façon très naturelle. Dans le cas des représentations unitaires, cette action a été définie par Kasparov. En effet, pour tout groupe localement compact $G$, l'anneau des représentations de Kasparov $KK_G(\CC,\CC)$, défini dans \cite{Kasparov88} et noté $R(G)$, est le groupe abélien formé des classes d'homotopie de représentations de Fredholm unitaires de $G$, sur lequel le produit de Kasparov définit une structure d'anneau commutatif. L'anneau des représentations unitaires de dimension finie de $G$ s'envoie sur $R(G)$; si $G$ est un groupe compact, cette application est un isomorphisme (voir par exemple \cite[Paragraph 8]{Higson98}). De plus, pour toutes $G$-$C^*$-algèbres $A$ et $B$, le produit de Kasparov définit une structure de $R(G)$-module sur le groupe $KK_G(A,B)$. En particulier, si $X$ est un $G$-espace propre (et $G$-compact), le groupe $KK_G(C_0(X),\CC)$ est muni d'une structure de module sur $R(G)$. En passant à la limite inductive, il est clair que le produit de Kasparov définit alors une structure de $R(G)$-module sur $K^{\mathrm{top}}(G)$, ce qui définit une action de l'anneau des représentations \emph{unitaires} de dimension finie de $G$ sur $K^{\mathrm{top}}(G)$. Nous allons généraliser cette action au cas des représentations non unitaires de $G$.\\ 

Supposons d'abord que $G$ soit un groupe de Lie connexe et $K$ un sous-groupe compact maximal de $G$ tels que l'espace $G/K$ soit une variété de dimension paire munie d'une structure spin$^{\CC}$ $G$-équivariante. Dans ce cas, il est clair que $K^{\mathrm{top}}(G)$ est muni d'une structure de module sur $R_F(G)$. En effet, $K^{\mathrm{top}}(G)$ est isomorphe à $R(K)$, l'anneau des représentations unitaires de $K$ et toute représentation de dimension finie $\rho$ non unitaire définit  un endomorphisme de $R(K)$: comme $K$ est un groupe compact, la restriction de $\rho$ sur $K$ est équivalente à une représentation unitaire de $K$ et donc $\rho\vert_K$ définit un élément de $R(K)$. Le produit tensoriel par $\rho\vert_K$,
\begin{align*}
R(K)&\rightarrow R(K)\\
[\sigma]&\mapsto [\rho\vert_K\otimes\sigma],
\end{align*}
induit un endomorphisme $\Upsilon_{\rho}$ de $K^{\mathrm{top}}(G)$ qui définit l'action $R_F(G)$ sur $K^{\mathrm{top}}(G)$.\\

Dans le cas général, étant donné un $G$-espace $X$ qui soit $G$-propre et $G$-compact, on considère le fibré triviale $G$-équivariant au dessus de $X$ de fibre $V$, que l'on note $\mathcal{V}$. On a alors le lemme suivant
\begin{Lemma}
 Le fibré $\mathcal{V}$ peut être muni d'une structure hermitienne $G$-équivariante.
\end{Lemma}
\begin{proof}
 Soit $b\in C_c(X,\RR^+)$ une fonction à support compact sur $X$ telle que $\int_Gb(g^{-1}x)dg=1$ pour tout $x\in X$. Soit $K$ le support de $b$ qui est une partie compacte de $X$. On munit $V$ d'une structure hermitienne quelconque au dessus de $K$ que l'on note $\langle.,.\rangle_{1,x}$ pour tout $x\in K$. Soient $x\in X$ et $v_1,v_2$ appartenant à la fibre de $\mathcal{V}$ au dessus de $x$ notée $\mathcal{V}_x $. On pose:
$$\langle v_1,v_2\rangle_{2,x}:=\int_Gb(g^{-1}x)\big\langle\rho(g^{-1})v_1,\rho(g^{-1})v_2\big\rangle_{1,g^{-1}x}dg.$$
Ceci définit une structure hermitienne $G$-équivariante au dessus de $\mathcal{V}$. En effet, pour $g_1\in G$, $x\in X$ et $v_1,v_2\in \mathcal{V}_x$
\begin{align*}
  g_1\big\langle v_1,v_2\big\rangle_{2,x}&=\big\langle \rho(g_1^{-1})v_1,\rho(g_1^{-1})v_2\big\rangle_{2,g_1^{-1}x}\\
&=\int_Gb(g^{-1}g_1^{-1}x)\big\langle\rho(g^{-1}g_1^{-1})v_1,\rho(g^{-1}g_1^{-1})v_2\big\rangle_{1,g^{-1}g_1^{-1}x}dg\\
&=\int_Gb(h^{-1}x)\big\langle\rho(h^{-1})v_1,\rho(h^{-1})v_2\big\rangle_{1,h^{-1}x}dg\\
&=\big\langle v_1,v_2\big\rangle_{2,x}.
\end{align*}
\end{proof}
On considère maintenant l'espace $C_0(X,\mathcal{V})$ des sections de $\mathcal{V}$ qui s'annulent à l'infini, où $\mathcal{V}$ est muni de la structure hermitienne $G$-équivariante, $\langle.,.\rangle_2$, définie ci-dessus.
\begin{Lemma}
\sloppy Le couple $(C_0(X,\mathcal{V}),0)$ définit un élément de $E_G(C_0(X),C_0(X))$. On note $[C_0(X,\mathcal{V})]$ sa classe dans $KK_G(C_0(X),C_0(X))$.
\end{Lemma}
\begin{proof}
En effet, $C_0(X)$ agit sur $C_0(X,\mathcal{V})$ (à gauche et à droite) par multiplication point par point. On définit un produit scalaire sur $C_0(X,\mathcal{V})$ à valeurs dans $C_0(X)$ de la façon suivante:
étant données $s_1$ et $s_2$ deux sections de $\mathcal{V}$ qui s'annulent à l'infini,
$$\langle s_1,s_2\rangle=(x\mapsto\langle s_1(x),s_2(x)\rangle_{2,x}).$$
Ce produit scalaire fait de $C_0(X,\mathcal{V})$ un $C_0(X)$-module hilbertien $G$-équivariant. L'action de $C_0(X)$ à gauche commute avec l'action à droite trivialement.\\
\end{proof}
Maintenant, si $A$ est une $G$-$C^*$-algèbre, comme $[C_0(X,\mathcal{V})]$ est un élément de $KK_G(C_0(X),C_0(X))$, le produit de Kasparov par $[C_0(X,\mathcal{V})]$ induit un morphisme de groupes
$$\xymatrix{KK_G(C_0(X),A)\ar[rrr]^{\scriptscriptstyle[C_0(X,\mathcal{V})]\otimes_{C_0(X)}}&&&KK_G(C_0(X),A)\\}.$$ En passant à la limite inductive on obtient un morphisme
$$\Upsilon_{\rho}:K^{\mathrm{top}}(G,A){\longrightarrow}K^{\mathrm{top}}(G,A).$$

Nous allons démontrer le théorème suivant
\begin{Theo}\label{principal}
 Le morphisme de groupes de $K^{\mathrm{top}}(G,A)$ dans lui-même induit par le produit de Kasparov par $[C_0(X,\mathcal{V})]$ rend commutatifs les deux diagrammes suivants
$$\xymatrix{
    K^{\mathrm{top}}(G,A)\ar[r]^{\mu^A_{\rho,r}}\ar[d]_{\Upsilon_{\rho}}& K(A\rG)\ar[d]^{\Lambda_{\rho,r}}\\
K^{\mathrm{top}}(G,A)\ar[r]^{\mu^A_r}& K(C^*_r(G,A))
}\quad\hbox{et}\quad\xymatrix{
    K^{\mathrm{top}}(G,A)\ar[r]^{\mu^A_{\rho}}\ar[d]_{\Upsilon_{\rho}}& K(A\G)\ar[d]^{\Lambda_{\rho}}\\
K^{\mathrm{top}}(G,A)\ar[r]^{\mu^A}& K(C^*(G,A)).
}$$
\end{Theo}

Nous allons d'abord montrer un résultat analogue pour les algèbres $L^1$ qui implique le théorème \ref{principal}.

\subsection{Algèbres $L^1$. Rappels et notations}\label{L^1}
Étant donnés un groupe localement compact $G$ et une $G$-$C^*$-algèbre $A$, on rappelle que l'application identité sur l'espace des fonctions continues à support compact sur $G$ à valeurs dans $A$, $C_c(G,A)$, se prolonge en un morphisme d'algèbres de Banach de $L^1(G,A)$ dans $C^*_r(G,A)$ (resp. $C^*(G,A)$), et donc $L^1(G,A)$ est une sous-algèbre dense de $C^*_r(G,A)$ (resp. $C^*(G,A)$). De même, si on note $L^{1,\rho}(G,A)$ la complétion de $C_c(G,A)$ pour la norme
 $$\|f\|_{L^{1,\rho}(G,A)}=\int_G\|f(g)\|_A\|\rho(g)\|_{\mathrm{End}(V)}dg,$$
pour $f\in C_c(G,A)$, alors l'application identité de $C_c(G,A)$ se prolonge en un morphisme d'algèbres de Banach de $L^{1,\rho}(G,A)$ dans le produit croisé tordu $A\rG$ (resp. $A\G$), et donc $L^{1,\rho}(G,A)$ est une sous-algèbre dense de $A\rG$ (resp. $A\G$). Nous allons montrer un énoncé pour les algèbres $L^1(G,A)$ et $L^{1,\rho}(G,A)$ analogue à celui du théorème \ref{principal} qui va impliquer le théorème \ref{principal}.
\begin{Rem}\label{unitaire}
 Si $\rho$ est une représentation unitaire, alors, pour toute $G$-$C^*$-algèbre $A$, $L^{1,\rho}(G,A)=L^1(G,A)$.
\end{Rem}

On rappelle que les algèbres $L^1(G)$ et $L^{1,\rho}(G)$ sont des complétions inconditionnelles de $C_c(G)$ et donc que Lafforgue a défini dans \cite[Proposition-Définition 1.5.5]{Lafforgue02}, des morphismes de descente pour ces algèbres:
\begin{align*}
j_{L^1}:KK^{\mathrm{ban}}_{G,\ell}(A,B)&\rightarrow KK^{\mathrm{ban}}(L^{1,\rho}(G,A),L^1(G,B))\\ j_{L^{1,\rho}}:KK^{\mathrm{ban}}_{G}(A,B)&\rightarrow KK^{\mathrm{ban}}(L^{1,\rho}(G,A),L^{1,\rho}(G,B)),                                \end{align*}
pour $A$ et $B$ des $G$-$C^*$-algèbres et pour $\ell$ la longueur sur $G$ définit par la norme de $\rho$. \\
Par abus de notation, pour toutes $G$-$C^*$-algèbre $A$ et $B$, nous allons définir  un troisième morphisme de ``descente``$$j_{\rho}:KK_G(A,B)\rightarrow KK^{\mathrm{ban}}(L^{1,\rho}(G,A),L^{1,\rho}(G,B)),$$
comme la composée de $\iota$ et $j_{L^{1,\rho}}$, c'est-à-dire $j_{\rho}:=j_{L^{1,\rho}}\circ\iota$. Ce morphisme est l'analogue sur $L^{1,\rho}$ du morphisme de descente tordu défini dans \cite[Définition 2.9]{Gomez08}.\\

Soit $X$ une partie $G$-compacte de $\underline{E}G$. Soit $c$ une fonction continue à support compact sur $X$ et à valeurs dans $\RR_+$ telle que, pour tout $x\in X$, $\int_Gc(g^{-1}x)dg=1$. Soit $p$ la fonction sur $G\times X$ définie par la formule $$p(g,x)=\sqrt{c(x)c(g^{-1}x)}.$$
La fonction $p$ définit alors un projecteur de $C_c(G,C_0(X))$, que l'on note $p$ par abus de notation. On note $\Delta_{\rho}$ l'élément de $K(L^{1,\rho}(G,C_0(X)))$ qu'il définit.\\
On rappelle alors que, pour toute $G$-$C^*$-algèbre $A$, la variante du morphisme de Baum-Connes à valeurs dans $K(L^{1,\rho}(G,A))$ définie par Lafforgue dans \cite[1.7]{Lafforgue02}, est donnée, à passage à la limite inductive près, par le morphisme
$$\mu^A_{L^{1,\rho}}:KK_G(C_0(X),A)\rightarrow K(L^{1,\rho}(G,A)),$$défini par la formule $$\mu_{L^{1,\rho}}(\alpha)=\Sigma(j_{\rho}(\alpha))(\Delta_{\rho}),$$ pour tout élément $\alpha$ dans $KK_G(C_0(X),A)$.\\

Pour montrer qu'un énoncé analogue à l'énoncé du théorème \ref{principal} pour les algèbres $L^1$ implique le théorème \ref{principal}, on aura besoin du lemme de compatibilité suivant

\begin{Lemma}\label{compatL1}
 Les diagrammes 
$$\xymatrix{K^{\mathrm{top}}(G,A)\ar[rd]_{\mu^A_{\rho,r}}\ar[r]^{\mu^A_{L^{1,\rho}}}& K(L^{1,\rho}(G,A))\ar[d]^{i_*}\\
&K(A\rG)
}\quad\hbox{et}\quad \xymatrix{K^{\mathrm{top}}(G,A)\ar[rd]_{\mu^A_{\rho}}\ar[r]^{\mu^A_{L^{1,\rho}}}& K(L^{1,\rho}(G,A))\ar[d]^{i'_*}\\
&K(A\G)
}$$
où $i:L^{1,\rho}(G,A)\rightarrow A\rG$ (resp. $i':L^{1,\rho}(G,A)\rightarrow A\G$) est le prolongement de l'application identité sur $C_c(G,A)$, sont commutatifs.
\end{Lemma}
\begin{proof}
 La démonstration est analogue à la démonstration de la proposition \ref{BCcomplétions}.
\end{proof}
\begin{Rem}
 Le lemme \ref{compatL1} et la remarque \ref{unitaire} impliquent en particulier que si $\rho$ est une représentation unitaire alors $\mu_{\rho,r}^A=\mu_r^A$
(resp. $\mu_{\rho}^A=\mu^A$) pour toute $G$-$C^*$-algèbre $A$ et donc le morphisme de Baum-Connes tordu coïncide avec le morphisme de Baum-Connes classique.
\end{Rem}

\subsection{Démonstration du théorème \ref{principal}}
Soit $X$ une partie $G$-compacte de $\underline{E}G$. On va montrer le théorème suivant

\begin{Theo}\label{diagX}
Pour toute $G$-$C^*$-algèbre $A$, le morphisme, noté $\Upsilon_{\rho}$, qui à un élément $\alpha\in KK_G(C_0(X),A)$ associe le produit de Kasparov $[C_0(X,\mathcal{V})]\otimes_{C_0(X)}\alpha$ dans $KK_G(C_0(X),A)$ rend commutatif le diagramme suivant
$$\xymatrix{
    KK_G(C_0(X),A)\ar[r]^{\,\,\,\,\,\,\,\,\,\mu^A_{\rho,r}}\ar[d]_{\Upsilon_{\rho}}& K(A\rG)\ar[d]^{\Lambda_{\rho,r}}\\
KK_G(C_0(X),A)\ar[r]^{\,\,\,\,\,\,\,\,\,\mu^A_r}& K(C^*_r(G,A)).\\
}$$
\end{Theo}

La preuve repose sur un résultat analogue pour les algèbres $L^1$ que nous allons énoncer plus bas et qui implique aussi la commutativité du diagramme du théorème \ref{principal} pour les produits croisés maximaux de façon analogue. Pour énoncer le résultat pour les algèbres $L^1$, nous avons besoin de la définition suivante.
\begin{Def}
Pour toute $G$-$C^*$-algèbre $A$, la représentation $(\rho,V)$ de $G$ définit le morphisme de groupes suivant,
$$\Sigma\big(j_{\mathrm{L^1}}(\sigma_A([V]))\big):K(L^{1,\rho}(G,A))\rightarrow K(L^1(G,A)),$$
où, on rappelle que, dans ce cas, $$\Sigma:KK^{\mathrm{ban}}\big(L^{1,\rho}(G,A),L^1(G,A)\big)\rightarrow \mathrm{Hom}\big(K(L^{1,\rho}(G,A)), K(L^1(G,A))\big).$$
En effet, $[V]$ est un élément de $KK^{\mathrm{ban}}_{G,\ell}(\CC,\CC)$ et donc $j_{L^1}(\sigma_A([V]))$ appartient à $KK^{\mathrm{ban}}(L^{1,\rho}(G,A),L^{1}(G,A))$.
\end{Def}

Nous allons alors montrer le théorème suivant qui est l'analogue pour les algèbres $L^1$ du théorème \ref{diagX} annoncé dans la section \ref{L^1}.
\begin{Theo}\label{lemmediagX}
Pour toute $G$-$C^*$-algèbre $A$ et pour toute partie $G$-compacte $X$ de $\underline{E}G$, le diagramme 
$$\xymatrix{
    KK_G(C_0(X),A)\ar[r]^{\,\,\,\,\,\,\,\,\,\mu^A_{L^{1,\rho}}}\ar[d]_{\Upsilon_{\rho}}& K(L^{1,\rho}(G,A))\ar[d]^{\Sigma(j_{L^1}(\sigma_A([V])))}\\
KK_G(C_0(X),A)\ar[r]^{\,\,\,\,\,\,\,\,\,\mu^A_{L^1}}& K(L^1(G,A)),\\
}$$
est commutatif.
\end{Theo}

Montrons d'abord que le théorème \ref{lemmediagX} implique le théorème \ref{diagX}.\\
Soient
 $$i_1:L^1(G,A)\rightarrow C^*_r(G,A)\quad
\hbox{et}\quad i_2:L^{1,\rho}(G,A)\rightarrow A\rG,$$
les inclusions naturelles qui prolongent l'application identité sur $C_c(G,A)$. La fonctorialité de $\Sigma$ implique que le morphisme $\Lambda_{\rho,r}$ de $K(A\rG)$ dans $K(C^*_r(G,A))$ rend commutatif le diagramme suivant:
$$\xymatrix{
    K(L^{1,\rho}(G,A))\ar[r]^{i_{2,*}}\ar[d]_{\Sigma(j_{L^1}(\sigma_A([V])))}& K(A\rG)\ar[d]^{\Lambda_{\rho,r}}\\
K(L^1(G,A))\ar[r]^{i_{1,*}}& K(C^*_r(G,A)).
}$$
En effet, il est facile de voir que $$i_{2}^*([C^*_r(G,A)\otimes V])=i_{1,*}\big(j_{L¹}(\sigma_A([V]))\big),$$
dans $KK^{\mathrm{ban}}(L^{1,\rho}(G,A),C^*_r(G,A))$. D'où,
\begin{align*}
 \Sigma([C^*_r(G,A)\otimes V])i_{2,*}&=i_2^*\big(\Sigma([C^*_r(G,A)\otimes V)]\big),\\
&=\Sigma(i_2^*([C^*_r(G,A)\otimes V])),\\
&=\Sigma(i_{1,*}\big(j_{L^1}(\sigma_A([V]))\big),\\
&=i_{1,*}\big(\Sigma(j_{L^1}(\sigma_A([V])))\big),
\end{align*}
et comme $\Lambda_{\rho,r}=\Sigma([C^*_r(G)\otimes V])$ par définition, alors le diagramme est commutatif ce qui prouve que le théorème \ref{lemmediagX} implique le théorème \ref{diagX}. Il en est de même dans le cas des produits croisés maximaux.\\

La démonstration du théorème \ref{lemmediagX} repose sur le lemme et la proposition suivants

\begin{Lemma}\label{action}
Les éléments $\iota([C_0(X,\mathcal{V})])$ et $\sigma_{C_0(X)}([V])$ sont égaux dans $KK^{\mathrm{ban}}_{G,\ell}(C_0(X),C_0(X))$.
\end{Lemma}
Ce lemme implique en particulier que les éléments $j_{L^{1}}(\iota([C_0(X,\mathcal{V})]))$ et $j_{L^{1}}(\sigma_{C_0(X)}([V]))$ dans $KK^{\mathrm{ban}}(L^{1,\rho}(G,C_0(X)),L^1(G,C_0(X)))$ sont égaux et donc qu'ils agissent de la même manière sur l'élément $\Delta_{\rho}$ de $K(L^{1,\rho}(G,C_0(X)))$, c'est-à-dire
$$\Sigma(j_{L^{1}}(\iota([C_0(X,\mathcal{V})])))(\Delta_{\rho})=\Sigma(j_{L^{1}}(\sigma_{C_0(X)}([V])))(\Delta_{\rho}).$$
\begin{proof}
Nous allons montrer que $\iota([C_0(X,\mathcal{V})])$ et $\sigma_{C_0(X)}([V])$ sont homotopes dans $E^{\mathrm{ban}}_{G,\ell}(C_0(X),C_0(X))$.\\
Soit $s$ une fonction sur $X$ à valeurs dans $V$ qui s'annule à l'infini. On pose:
 \begin{align*}
\|s\|_0&:=\|s\|_{C_0(X,V)}=\sup\limits_{x\in X}\|s(x)\|_V\\
\|s\|_1&:=\|s\|_{C_0(X,\mathcal{V})}=\sup\limits_{x\in X}\|s(x)\|_{\mathcal{V}}.
 \end{align*}
Soit $\mathcal{H}_t$, pour $t\in [0,1]$, l'espace de Hilbert des fonctions sur $X$ à valeurs dans $V$ qui s'annulent à l'infini pour la norme:
$$\|s\|_t=t\|s\|_1+(1-t)\|s\|_0.$$
Alors, $\mathcal{H}=(\mathcal{H}_t)_{t\in[0,1]}\in E^{\mathrm{ban}}_{G,\ell}\big(C_0(X),C_0(X)[0,1]\big)$. Il est clair que $\mathcal{H}$ réalise l'homotopie cherchée.
\end{proof}

\begin{Pro}\label{diag}
Soient $A$ et $B$ des $C^*$-algèbres et soit $\alpha$ un élément de $KK_G(A,B)$. Alors le diagramme 
$$\xymatrix{
    K(L^{1,\rho}(G,A))\ar[rr]^{\Sigma(j_{\rho}(\alpha))}\ar[dd]^{\Sigma(j_{L^1}(\sigma_A([V])))}& &K(L^{1,\rho}(G,B))\ar[dd]^{\Sigma(j_{L^1}(\sigma_B([V])))}\\
&&\\
K(L^{1}(G,A))\ar[rr]^{\Sigma(j_{L^{1}}(\alpha))}& &K(L^{1}(G,B))\\
}$$
 est commutatif.
\end{Pro}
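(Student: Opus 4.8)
The plan is to realise each of the two composites in the square as $\Sigma$ applied to a single class in $KK^{\mathrm{ban}}(L^{1,\rho}(G,A),L^1(G,B))$, and then to show that the two classes so produced coincide. The clockwise composite (top then right) is $\Sigma(j_{L^1}(\sigma_B([V])))\circ\Sigma(j_\rho(\alpha))$, and the counter-clockwise composite (left then bottom) is $\Sigma(j_{L^1}(\alpha))\circ\Sigma(j_{L^1}(\sigma_A([V])))$, where $j_{L^1}(\alpha)$ denotes the $L^1$-descent of $\alpha$ (trivial length) from $L^1(G,A)$ to $L^1(G,B)$. Since $\Sigma$ is compatible with the Kasparov product, i.e. $\Sigma(x\otimes y)=\Sigma(y)\circ\Sigma(x)$ (cf. \cite[Proposition 1.2.9]{Lafforgue02}), the first reduction identifies these with $\Sigma$ of the products
$$j_\rho(\alpha)\otimes_{L^{1,\rho}(G,B)}j_{L^1}(\sigma_B([V]))\qquad\text{and}\qquad j_{L^1}(\sigma_A([V]))\otimes_{L^1(G,A)}j_{L^1}(\alpha),$$
both of which land in $KK^{\mathrm{ban}}(L^{1,\rho}(G,A),L^1(G,B))$. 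These particular products are defined even though the Banach Kasparov product is not available in general, since in each case one factor is the descent of the elementary class $[V]$ (a finite-dimensional module carrying the zero operator), for which the product is simply the functorial operation ``tensoring by $V$''.

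The heart of the matter is an identity already visible at the equivariant level. I would first establish, in $KK^{\mathrm{ban}}_{G,\ell}(A,B)$, the naturality relation
$$\sigma_A([V])\otimes_A\iota(\alpha)=\iota(\alpha)\otimes_B\sigma_B([V]),$$
where $\iota(\alpha)$ is viewed in the length-$\ell$ theory via the canonical increase-of-length map. This expresses that the endomorphism ``tensor by $[V]$'' commutes with $\iota(\alpha)$, and it holds because $\sigma_D([V])$ is the exterior product of the central class $[V]\in KK^{\mathrm{ban}}_{G,\ell}(\CC,\CC)$ with the identity of $D$: under the canonical identifications $\CC\otimes^{\pi}A\simeq A$ and $\CC\otimes^{\pi}B\simeq B$ both sides equal the exterior product of $[V]$ with $\iota(\alpha)$, and commutativity of the exterior product (no sign intervenes, $[V]$ being of degree $0$) gives the claim.

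It then remains to transport this equality through descent. Using that the descent morphisms $j_{L^1}$ and $j_{L^{1,\rho}}$ intertwine the Kasparov product (cf. the descent constructions of \cite[Section 1.5]{Lafforgue02}), and keeping track of the weightings, one obtains
$$j_\rho(\alpha)\otimes_{L^{1,\rho}(G,B)}j_{L^1}(\sigma_B([V]))=j_{L^1}\big(\iota(\alpha)\otimes_B\sigma_B([V])\big)$$
together with
$$j_{L^1}(\sigma_A([V]))\otimes_{L^1(G,A)}j_{L^1}(\alpha)=j_{L^1}\big(\sigma_A([V])\otimes_A\iota(\alpha)\big).$$
The equivariant identity above identifies the two right-hand sides, so the two products agree in $KK^{\mathrm{ban}}(L^{1,\rho}(G,A),L^1(G,B))$, and applying $\Sigma$ yields exactly the commutativity of the diagram.

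The \emph{main obstacle} is precisely this last transport step, since in Lafforgue's Banach $KK$-theory the Kasparov product is not defined in general; one must check that the products occurring here exist and that descent intertwines them. The delicate point is the bookkeeping of the $\rho$-weighting: descending the length-$\ell$ class $\sigma([V])$ moves the source from $L^{1,\rho}(G,\cdot)$ to $L^1(G,\cdot)$, whereas $j_\rho(\alpha)$ preserves the $\rho$-weighting, so one must verify that the source and target algebras match along both paths. As in the proof of Proposition~\ref{BCcompl�tions}, I expect the safest route is to realise the required equalities directly at the level of $E^{\mathrm{ban}}$-cycles (the finite-dimensional module of $[V]$ with zero operator, externally tensored with a cycle representing $\alpha$) and to check the norm estimates against the weighted $L^{1,\rho}$-norms, rather than invoking a general multiplicativity theorem.
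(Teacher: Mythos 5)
Your proposal correctly isolates where the difficulty sits, but the route you take through it has a genuine gap. Every key step of your argument --- writing each composite as $\Sigma$ of a single product class, the identity $\sigma_A([V])\otimes_A\iota(\alpha)=\iota(\alpha)\otimes_B\sigma_B([V])$, and the claim that $j_{L^1}$, $j_{L^{1,\rho}}$ and $\Sigma$ intertwine these products --- presupposes an interior Kasparov product in Banach $KK$-theory in which one factor is the class $[V]$. No such product is available in Lafforgue's framework: the compatibility $\Sigma(x\otimes y)=\Sigma(y)\circ\Sigma(x)$ is only established (Proposition 1.6.10 of \cite{Lafforgue02}) for products of classes coming from genuine equivariant $KK$-theory via $\iota$, and $[V]$ --- the cycle of a non-unitary finite-dimensional representation --- is precisely not of that form unless $\rho$ is unitarisable. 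Defining the one-sided products by hand (tensoring a cycle by the finite-dimensional module with zero operator) is easy, but you would then still have to prove that $\Sigma$ and the two descent maps turn these ad hoc products into composition of maps on $K$-theory; that is most of the work, it is nowhere available off the shelf, and your closing remark that one should ``realise the required equalities directly at the level of $E^{\mathrm{ban}}$-cycles'' acknowledges the problem without supplying the argument.

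The missing idea is the standard device the paper uses to avoid products entirely: by Lafforgue's Lemme 1.6.11 one replaces $\alpha$ by a roof of honest $G$-equivariant algebra morphisms $\theta:A_1\to A$ and $\eta:A_1\to B$ with $\theta^*(\alpha)=[\eta]$ and with $j_{L^1}(\iota(\theta))_*$ and $j_{\rho}(\theta)_*$ invertible, so that $\Sigma(j(\alpha))=j(\eta)_*\circ j(\theta)_*^{-1}$ (lemme \ref{triangle}). The desired commutation then reduces to the two squares of lemme \ref{rombo}, which follow from the purely functorial identities $\theta^*(\sigma_A([V]))=\theta_*(\sigma_{A_1}([V]))$ and $\eta^*(\sigma_B([V]))=\eta_*(\sigma_{A_1}([V]))$ of lemme \ref{diagKK}. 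This last point is exactly the commutation of ``tensoring by $[V]$'' with $\alpha$ that you wanted to express via exterior products, but recast in a form --- pullback versus pushforward along actual morphisms --- that requires no product and only the functoriality of $\Sigma$, $j_{L^1}$ and $j_{L^{1,\rho}}$.
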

\begin{proof}
 Soit $\alpha\in KK_G(A,B)$. D'après le lemme 1.6.11 de \cite{Lafforgue02}, il existe une $G$-$C^*$-algèbre que l'on note $A_1$, deux morphismes $G$-équivariants $\theta:A_1\rightarrow A$ et $\eta:A_1\rightarrow B$ et un élément $\alpha_1$ dans $ KK_G(A,A_1)$, tels que $\theta^*(\alpha_1)=\mathrm{Id}_{A_1}$ dans $KK_G(A_1,A_1)$, $\theta_*(\alpha_1)=\mathrm{Id}_{A}$ dans $KK_G(A,A)$, et $\theta^*(\alpha)=[\eta]$ dans $KK_G(A,B)$. On peut écrire alors le diagramme suivant en $KK$-théorie\\
$$\xymatrix@1{
&&A_1\ar[lld]_{\theta}\ar[rrd]^{\eta}\ar@{-->}[dd]^<(0.6){\sigma_{A_1}([V])}&&\\
    A\ar@{-->}[rrrr]^<(0.3){\alpha}\ar@{-->}[dd]_{\sigma_A([V])}\ar@/^2pc/ @{-->}[rru]^(0.6){\alpha_1}&&& &B\ar@{-->}[dd]^{\sigma_B([V])}\\
&&A_1\ar[lld]_{\theta}\ar[rrd]^{\eta} &&\\
A\ar@{-->}[rrrr]^{\alpha}\ar@/^2pc/ @{-->}[rru]^(0.6){\alpha_1}&&& &B\\
}$$\\

\noindent où les flèches en pointillés désignent des éléments en $KK$-théorie qui ne sont pas nécessairement donnés par des morphismes d'algèbres.\\

\begin{Lemma}\label{diagKK}
Comme $[V]\in KK_{G,l}^{\mathrm{ban}}(\CC,\CC)$, on a alors les deux égalités suivantes
$$\theta^*(\sigma_A([V]))=\theta_*(\sigma_{A_1}([V]))\quad\hbox{et}\quad\eta^*(\sigma_B([V]))=\eta_*(\sigma_{A_1}([V])),$$
dans $KK^{\mathrm{ban}}_{G,\ell}(A_1,A)$ et $KK^{\mathrm{ban}}_{G,\ell}(A_1,B)$.
\end{Lemma}
\begin{proof}
 Le lemme découle de la fonctorialité de $KK^{\mathrm{ban}}$.
\end{proof}

On rappelle que nous avons noté $j_{\rho}=j_{L^{1,\rho}}\circ\iota$ pour simplifier les notations. Le fait que $\Sigma$, $j_{L^1}$ et $j_{L^{1,\rho}}$ soient fonctoriels nous permet, d'une part, d'écrire le diagramme suivant:\\
$$\xymatrix{
&K(L^{1,\rho}(G,A_1))\ar[ld]_{j_{\rho}(\theta)_*}\ar[rd]|{\quad}^{j_{\rho}(\eta)_*}\ar[ddd]|-(0.35){\phantom{hoooola}}^<(0.6){\Sigma(j_{L^1}(\sigma_{A_1}[V]))}&\\
    K(L^{1,\rho}(G,A))\ar[rr]^{\Sigma(j_{\rho}(\alpha))\,\,\,\,\,\,\,\,\,\,\,\,\,\,\,\,\,\,\,\,\,\,\,\,\,\,\,\,\,\,\,\,\,\,\,\,\,\,\,\,}\ar[ddd]_{\Sigma(j_{L^1}(\sigma_A([V])))}& &K(L^{1,\rho}(G,B))\ar[ddd]^{\Sigma(j_{L^1}(\sigma_B([V])))}\\
&&\\
&K(L^1(G,A_1))\ar[ld]|{\,\,\,\,\,\,\,\,\,}_{j_{L^1}(\iota(\theta))_*}\ar[rd]^{j_{L^1}(\iota(\eta))_*} &\\
K(L^{1}(G,A))\ar[rr]^{\Sigma(j_{L^{1}}(\iota(\alpha)))}& &K(L^{1}(G,B))\\
}$$\\
et d'autre part de montrer que $j_{L^1}(\iota(\theta))_*$ et $j_{\rho}(\theta)_*$ sont inversibles (voir démonstration du Lemme 1.6.11 de \cite{Lafforgue02}). On va montrer que ce diagramme est commutatif en le découpant en morceaux.
\begin{Lemma}\label{triangle}
On a les égalités suivantes:
 $$j_{L^1}(\iota(\eta))_*\circ j_{L^1}(\iota(\theta))_*^{-1}=\Sigma\big(j_{L^1}(\iota(\alpha))\big),$$
et
$$j_{\rho}(\eta)_*\circ j_{\rho}(\theta)_*^{-1}=\Sigma\big(j_{\rho}(\alpha)\big).$$
\end{Lemma}
\begin{proof}
 Par fonctorialité et par définition de $\eta$ et de $\theta$:
\begin{align*}
 \Sigma(j_{L^1}(\iota(\alpha)))\circ j_{L^1}(\iota(\theta))_*&=\Sigma\big(j_{L^1}(\iota(\theta))^{*}(j_{L^{1}}(\iota(\alpha)))\big)\\
 &=\Sigma\big(j_{L^1}(\iota(\theta^*(\alpha)))\big),\\
&=\Sigma\big(j_{L^1}(\iota(\eta))\big),\\
&=j_{L^1}(\iota(\eta))_*.
\end{align*}
La deuxième égalité se démontre de façon complètement analogue.
\end{proof}
\begin{Lemma}\label{rombo}
 On a les égalités suivantes:
$$\Sigma\big(j_{L^1}(\sigma_{\scriptscriptstyle A}([V]))\big)\circ j_{L^{1,\rho}}(\iota(\theta))_*=j_{L^1}(\iota(\theta))_*\circ\Sigma\big(j_{L^1}(\sigma_{\scriptscriptstyle A_1}([V]))\big),$$
et
$$\Sigma\big(j_{L^1}(\sigma_{\scriptscriptstyle B}([V]))\big)\circ j_{L^{1,\rho}}(\iota(\eta))_*=j_{L^1}(\iota(\eta))_*\circ\Sigma\big(j_{L^1}(\sigma_{\scriptscriptstyle A_1}([V]))\big).$$

\end{Lemma}

\begin{proof}
La fonctorialité et le lemme \ref{diagKK} permettent d'avoir les égalités suivantes:
\begin{align*}
 \Sigma\big(j_{L^1}(\sigma_{\scriptscriptstyle A}([V]))\big)\circ j_{L^{1,\rho}}(\iota(\theta))_*&=\Sigma\big(j_{L^1}(\theta^*(\sigma_{\scriptscriptstyle A}([V])))\big),\\
&=\Sigma\big(j_{L^1}(\theta_*(\sigma_{\scriptscriptstyle A_1}([V])))\big),\\
&=j_{L^1}(\iota(\theta))_*\circ\Sigma\big(j_{L^1}(\sigma_{\scriptscriptstyle A_1}([V]))\big),
\end{align*}
et d'autre part:
\begin{align*}
j_{L^1}(\iota(\eta))_*\circ\Sigma\big(j_{L^1}(\sigma_{A_1}([V]))\big),
&=\Sigma\big(j_{L^1}(\eta_*(\sigma_{\scriptstyle A_1}([V])))\big),\\
&=\Sigma\big(j_{L^1}(\eta^*(\sigma_B([V])))\big),\\
&=\Sigma\big(j_{L^1}(\sigma_B([V]))\big)\circ j_{L^1}(\iota(\eta))_*.
\end{align*}
\end{proof}

Maintenant on est prêt pour conclure. D'après les deux lemmes précédents, on a:
\begin{align*}
 \Sigma(j_{L^1}(\iota(\alpha)))&\circ\Sigma\big(j_{L^1}(\sigma_{\scriptscriptstyle A}([V]))\big)\circ j_{L^{1,\rho}}(\iota(\theta))_*,\\
&=\Sigma(j_{L^1}(\iota(\alpha)))\circ j_{L^1}(\iota(\theta))_*\circ\Sigma\big(j_{L^1}(\sigma_{\scriptscriptstyle A_1}([V]))\big),
\end{align*}
et ceci implique donc que:
\begin{align*}
 \Sigma\big(j_{L^1}(\iota(\alpha))\big)\circ\Sigma\big(j_{L^1}(&\sigma_{\scriptscriptstyle A}([V]))\big),\\
&=j_{L^1}(\iota(\eta))_*\circ\Sigma\big(j_{L^1}(\sigma_{A_1}([V]))\big)\circ j_{L^{1,\rho}}(\iota(\theta))_*^{-1},\\
&=\Sigma\big(j_{L^1}(\sigma_B([V]))\big)\circ j_{L^{1\rho}}(\iota(\eta))_*\circ j_{L^{1,\rho}}(\iota(\theta))_*^{-1},\\
&=\Sigma\big(j_{L^1}\sigma_B([V]))\big)\circ\Sigma\big(j_{L^{1,\rho}}(\iota(\alpha))\big).
\end{align*}
Et ceci termine la démonstration de la proposition \ref{diag}.
\end{proof}

\begin{proof}[Démonstration du théorème \ref{lemmediagX}]
On va maintenant montrer le théorème \ref{lemmediagX}. Nous allons noter $[\mathcal{V}]:=[C_0(X,\mathcal{V})]$ pour simplifier les notations. Soit $\alpha$ un élément de $KK_G(C_0(X),A)$.
La compatibilité de $\Sigma$ avec le produit de Kasparov (voir \cite[Proposition 1.6.10]{Lafforgue02}) implique les égalités suivantes
\begin{align*}
\mu_{L^1}([\mathcal{V}]\otimes\alpha)&=\Sigma\big(j_{L^1}(\iota([\mathcal{V}]\otimes\alpha))\big)(\Delta),\\
&=\Sigma\big(j_{L^1}(\iota(\alpha))\big)\circ\Sigma\big(j_{L^1}(\iota([\mathcal{V}]))\big)(\Delta),
\end{align*}
où $\Delta$ est l'élément de $K\big(L^1(G,C_0(X))\big)$ défini par $p$.\\ 
Mais d'après le lemme \ref{action} on a  $$\Sigma\big(j_{L^1}(\iota(\alpha))\big)\circ\Sigma\big(j_{L^1}(\iota([\mathcal{V}]))\big)(\Delta)=\Sigma\big(j_{L^1}(\iota(\alpha))\big)\circ\Sigma\big(j_{L^1}(\sigma_{C_0(X)}[V])\big)(\Delta),$$et la proposition \ref{diag} appliquée à $C_0(X)$ et à $A$ implique que le dernier membre de cette égalité est égal à $$\Sigma\big(j_{L^1}(\sigma_A([V]))\big)\circ\Sigma\big(j_{\rho}(\alpha)\big)(\Delta).$$ On a alors,
\begin{align*}
\mu_{L^1}([\mathcal{V}]\otimes\alpha)&=\Sigma\big(j_{L^1}(\iota(\alpha))\big)\circ\Sigma\big(j_{L^1}(\sigma_{C_0(X)}[V])\big)(\Delta),\\
&=\Sigma\big(j_{L^1}(\sigma_A([V]))\big)\circ\Sigma\big(j_{\rho}(\alpha)\big)(\Delta),\\
&=\Sigma\big(j_{L^1}(\sigma_A([V]))\big)\circ\mu_{L^{1,\rho}}(\alpha),
 \end{align*}
et c'est exactement ce qu'on voulait démontrer.\\
Dans la page suivante, nous écrivons un diagramme récapitulatif de la démonstration.\\
\newpage
\thispagestyle{empty}
\begin{sideways}
$$\xymatrix{
  KK_G(C_0(X),A)\ar[dddddddd]^{\Upsilon_{\rho}}\ar[rrr]^{\Sigma\circ j_{\rho}}\ar[ddddr]^{\Sigma\circ j_{L^{1}}\circ\iota}
  &&&\mathrm{Hom}\Big(K\big(L^{1,\rho}(G,C_0(X))\big),K\big(L^{1,\rho}(G,A)\big)\Big)\ar[r]^<(0.1){\scriptscriptstyle(.)(\Delta_{\rho})}\ar[dddd]^{\Sigma(j_{L^{1}}(\sigma_A[V]))}
    &K(L^{1,\rho}(G,A))\ar[dddddddd]_<(0.3){\Sigma(j_{L^1}(\sigma_A[V]))}\\
  &&&&\\
  &&&&\\
  &&&&\\
  &\mathrm{Hom}\Big(K\big(L^{1}(G,C_0(X))\big),K\big(L^{1}(G,A)\big)\Big)\ar[dddd]^{\Sigma(j_{L^{1}}(\iota[\mathcal{V}]))}\ar[rr]^{\Sigma(j_{L^{1}}(\sigma_{C_0(X)}[V]))}
  &&\mathrm{Hom}\Big(K\big(L^{1,\rho}(G,C_0(X))\big),K\big(L^{1}(G,A)\big)\Big)\ar[rdddd]^{(.)(\Delta_{\rho})}& \\
  &&&&\\
  &&&&\\
  &&&&\\
  KK_G(C_0(X),A)\ar[r]^(0.3){\scriptscriptstyle\Sigma\circ j_{ L^{1}}\circ\iota}&\mathrm{Hom}\Big(K\big(L^{1}(G,C_0(X))\big),K\big(L^{1}(G,A)\big)\Big)\ar[rrr]^{(.)(\Delta)}&&&K(L^1(G,A))\\
}$$\\
\end{sideways}
\end{proof}



\bibliographystyle{amsalpha}
\bibliography{bibliographie}

{\sc Maria Paula Gomez-Aparicio : Institut de Mathématiques de Jussieu, Equipe d'algèbres d'Opérateurs,
175 rue du chevaleret, 75013 Paris, France.}

{\it E-mail address : }\texttt{gomez@math.jussieu.fr}

\end{document}